\documentclass{amsart} 
% Please use this years dmtcs.cls as using an older will result in the
% wrong header

% DON'T LOAD ANY STYLES THAT CHANGE THE PAGE LAYOUT
% AND DON'T CHANGE THE PAGE LAYOUT BY HAND, EITHER.

\usepackage[latin1]{inputenc}
\usepackage{subfigure}
\usepackage{graphicx}
\usepackage{amssymb}
\usepackage{amsfonts}
\usepackage{amsmath,amscd}
\usepackage{array}
\usepackage{rotating}
\usepackage{epsfig}
\usepackage{xspace}
\usepackage{tikz}

%%%%%%%%%%%% PAGE LAYOUT %%%%%%%%%%%%%%%%%%%%%%%%%%%%%
\headheight=8pt
\topmargin=0pt
\textheight=624pt
\textwidth=432pt
\oddsidemargin=18pt
\evensidemargin=18pt
%%%%%%%%%%%%%%%%%%%%%%%%%%%%%%%%%%%%%%%%%%%%%%%%%%%%%%    

\newtheorem{example}{Example}[section]

\newtheorem{theorem}[example]{Theorem}

\newtheorem{corollary}[example]{Corollary}

\newtheorem{proposition}[example]{Proposition}

%%%%%%%%%%%%%%%%%%%%%%%% Algebres de Hopf combinatoires %%%%%%%%%%%%%%%%%%%%%%

            % sym (fns symetriques usuelles)
          % QSym
            % NCSF
           % NCSF
        % permutations
        % matrices
          % tableaux standard
            % arbres binaires planaires

        % Parking 
        % Mots initiaux 
        % Catalan
        % Schroder
       % Parking (commutative)
        % Endofonctions
          % Endofonctions duales
   % Mots initiaux
       % Parking croissantes (commutative)
    % Cycles
      % Cycles duale
 % Cycles duale bis (realisation)
   % Cycles bis (duale precedente)

%%%%%%%%%%%%%%%%%%%%%%%%%%%%%%%%%%%%%%%%%%%%%%%%%%%%%%%%%%%%%%%%%%%%%%%%%%%%%%

 % base des cycles bis

       % evaluation
     % inversions

          % shuffle shifte
       % shuffle augmente
     % concatenation shiftee
     % standardisation
   % standardisation cyclique
   % parkisation

\def\<{\langle}
\def\>{\rangle}
\def\NN{{\mathbb N}}    % entiers naturels
    % entiers relatifs
 % rationnels
    % reels
    % complexes
 % corps K

 % les parking 
 % les parking croissantes

         % F de FQSym
         % S de NCSF
         % G de FQSym^*
\def\SG{{\mathfrak S}}  % groupe symetrique

 % endomorphismes

   % fonctions de parking
 % fonctions de parking croissantes

% [arxiv_v2: inline-PS \special stripped, 148 chars]%special
\long\def\psboxit#1#2{%
\begingroup\setbox0=\hbox{#2}%
\dimen0=\ht0 \advance\dimen0 by \dp0%
    % Write out the PS code to set the current path using HEIGHT,
    % WIDTH , DEPTH of box0.
    \hbox{%
    % [arxiv_v2: inline-PS \special stripped, 280 chars]%
    \copy0%
    }%hbox
\endgroup%
}%psboxit
%
%%%%%%%%%%%%%%%%%%%%%%%%%%%%%%%%%%
%
% Macros pour rubans et tableaux
%

%
\def\SetTableau#1#2#3#4{%
  % Donnée : hauteur, profondeur, largeur, style
  \gdef\Tabvrule{\vrule\vrule width-0.4pt}
  \gdef\Tabhrule{\hrule\hrule height-0.4pt}  
  \gdef\Tabstrut{\vrule height#1 depth#2 width0pt\relax}
  \gdef\Tabbox##1{\hbox to #3{\hskip0.4pt\hfill\Tabstrut$#4##1$\hfill}}
} %setTableau
%
% Trois tailles possibles par défault

%
% Dessin d'une case d'un tableau
\def\Case#1{\vcenter{\Tabhrule%
                   \hbox{\Tabvrule\Tabbox{#1}\Tabvrule}\Tabhrule}}
% Dessin d'une case sans bord

%
% Dessin d'une case avec bord gauche

%
% Dessin d'une case avec bord inférieur

%
% Dessin d'une case grisée

%
% Tableau et Ruban...
\def\GenTab#1{\vcenter{\halign{&$\Case{##}$\cr#1}}\egroup}
\def\Tableau{%         
  \bgroup%
  \let\ =\omit%
  \let\\=\cr%
  \offinterlineskip\GenTab}
%
%

%

   % gauche trigebre dendriforme
       % milieu trigebre dendriforme
 % droite trigebre dendriforme

 % Black & White tree (general)
 % Labelled B&W tree TS = tridend+somme (general)
 % Labelled B&W tree TS = tridend+somme (interv.)
 % Set Operad B&W tree TS

 % Black & White tree (tridend)
 % Black & White tree (dend)
 % Black & White tree (dend+s)

 % parking croissantes.
 % parking croissantes codees via evaluation.

% graphicx is loaded automatically no need to put this in here anymore.
%
%\usepackage{graphicx}

% just comment this out if you don't have natbib
% \usepackage[round]{natbib}

% \usepackage{xspace}

%%%%%%%%%%%%%%%%%%%%%%%%%%%%%%%%%%%%%%%%%%%%%%%%%%%%%%%%%%%%%%%%%%%%%%%%%%%%%%%
%  Newcommand
%%%%%%%%%%%%%%%%%%%%%%%%%%%%%%%%%%%%%%%%%%%%%%%%%%%%%%%%%%%%%%%%%%%%%%%%%%%%%%%

\author[N. Borie]{Nicolas Borie}

% We recommend that you don't put "extended abstract" in the title
\title[Combinatorics of simple marked mesh patterns in $132$-avoiding permutations]{Combinatorics of simple marked mesh patterns in $132$-avoiding permutations}

\address{Univ. Paris Est
  Marne-La-Vall\'ee, Laboratoire d'Informatique Gaspard Monge, Cit\'e
  Descartes, B\^at Copernic -- 5, bd Descartes Champs sur Marne 77454
  Marne-la-Vall\'ee Cedex 2, France}

\keywords{permutation statistics, marked mesh pattern, distribution,
  Catalan numbers, Narayana numbers}

\begin{document}
\maketitle
\begin{abstract}
  We present some combinatorial interpretations for coefficients
  appearing in series partitioning the permutations avoiding $132$
  along marked mesh patterns. We identify for patterns in which
  only one parameter is non zero the combinatorial family in bijection
  with $132$-avoiding permutations and also preserving the statistic
  counted by the marked mesh pattern. 
 \end{abstract}

\section{Introduction}
\label{sec:intro}

Mesh patterns were introduced by Brändén and
Claesson~\cite{brandenClaesson} to provide explicit expansions for
certain permutation statistics as (possibly infinite) linear
combinations of (classical) permutation patterns. This notion was
further studied by Kitaev, Remmel and Tiefenbruck in some series of
papers refining conditions on permutations and patterns. The present
paper focuses on what the trio Kitaev, Remmel and Tiefenbruck call
\emph{simple marked pattern in $132$-avoiding permutations}~\cite{KRT}.

Let \begin{math}\sigma = \sigma_1 \dots \sigma_n \end{math} be a
permutation written in one-line notation. We now consider the graph
of \begin{math}\sigma\end{math}, \begin{math}G(\sigma)\end{math}, to
  be the set of points \begin{math}\{(i,\sigma_i): 1 \leqslant i
    \leqslant n \}\end{math}. For example, the graph of the
    permutation \begin{math}\sigma = 768945213 \end{math}
is represented Figure~\ref{mmp_draw}.

%%%%%%%%%%%%%%%%%%%%%%%%%%%% INTRODUCTION PICTURE
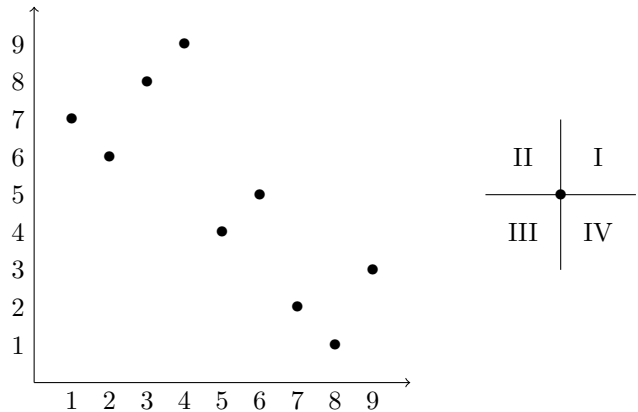
\begin{figure}
  \centering
  \begin{tikzpicture}[scale=0.5]
    \draw[->] (0,0) -- (0,10);
    \draw[->] (0,0) -- (10,0);

    \draw (1,7) node {$\bullet$};
    \draw (2,6) node {$\bullet$};
    \draw (3,8) node {$\bullet$};
    \draw (4,9) node {$\bullet$};
    \draw (5,4) node {$\bullet$};
    \draw (6,5) node {$\bullet$};
    \draw (7,2) node {$\bullet$};
    \draw (8,1) node {$\bullet$};
    \draw (9,3) node {$\bullet$};

    \foreach \y in {1,2,...,9} {
      \draw (0,\y) node[left] {$\y$};
      \draw (\y,0) node[below] {$\y$};
    }

    \draw (12,5) -- (16,5);
    \draw (14,3) -- (14,7);
    \draw (14,5) node {$\bullet$};
    \draw (15,6) node {I};
    \draw (13,6) node {II};
    \draw (13,4) node {III};
    \draw (15,4) node {IV};
  \end{tikzpicture}
  \caption{The graph of $\sigma = 768945213$ and the quadrant numeration.}
  \label{mmp_draw}
\end{figure}

Now, if we draw a coordinate system centered at the point $(i, \sigma_i)$,
we will be interested in the points that lie in the four quadrants I,
II, III, IV of that coordinate system as represented in
Figure~\ref{mmp_draw}. For any $a,b,c,d,$ four non negative integers,
we say that $\sigma_i$ matches the simple marked mesh pattern
$MMP(a,b,c,d)$ in $\sigma$ if, in the coordinate system centered at
$(i, \sigma_i)$, $G(\sigma)$ has at least $a$, $b$,
$c$ and $d$ points in the respective quadrants I, II, III and IV.

For $\sigma = 768945213$, then $\sigma_6 = 5$ matches
the simple marked mesh pattern $MMP(0,3,1,1)$, since relative to the
coordinate system with origin $(6,5)$, $G(\sigma)$ has respectively 0, 4, 1 and 3
points in Quadrants I, II, III and IV (see
Figure~\ref{ex_mmp_draw}). If a coordinate in $MMP(a,b,c,d)$ is zero,
then there is no condition imposed on the points in the corresponding
quadrant. We let $mmp(a,b,c,d)(\sigma)$ denote the number of $i$ such
that $\sigma_i$ matches the marked mesh pattern $MMP(a,b,c,d)$ in
$\sigma$. 

%%%%%%%%%%%%%%%%%%%%%%%%%%%% INTRODUCTION PICTURE
\begin{figure}
  \centering
  \begin{tikzpicture}[scale=0.5]
    \draw[->] (0,0) -- (0,10);
    \draw[->] (0,0) -- (10,0);

    \draw (1,7) node {$\bullet$};
    \draw (2,6) node {$\bullet$};
    \draw (3,8) node {$\bullet$};
    \draw (4,9) node {$\bullet$};
    \draw (5,4) node {$\bullet$};
    \draw (6,5) node {$\bullet$};
    \draw (7,2) node {$\bullet$};
    \draw (8,1) node {$\bullet$};
    \draw (9,3) node {$\bullet$};

    \foreach \y in {1,2,...,9} {
      \draw (0,\y) node[left] {$\y$};
      \draw (\y,0) node[below] {$\y$};
    }

    \draw (0.5,5) -- (9.5,5);
    \draw (6,0.5) -- (6,9.5);

    \draw (8,7) node {I};
    \draw (4,7) node {II};
    \draw (4,3) node {III};
    \draw (8,3) node {IV};

  \end{tikzpicture}
  \caption{The graph of $\sigma = 768945213$ with coordinate system at position $6$.}
  \label{ex_mmp_draw}
\end{figure}
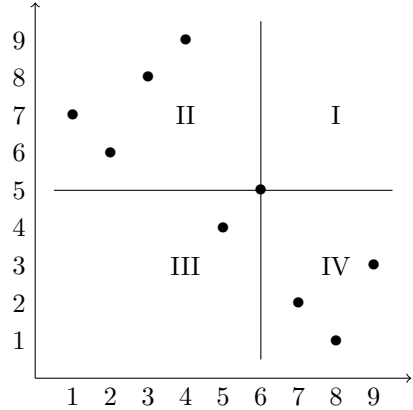

Given a sequence $w = w_1 \dots w_n$ of distinct integers, let
$pack(w)$ be the permutation obtained by replacing the $i$-th largest
integer that appears in $w$ by $i$. For example, if $w = 2754$, then
$pack(w) = 1432$. Given a permutation $\tau = \tau_1 \dots \tau_j$ in
the symmetric group $\SG_j$, we say that the pattern $\tau$ occurs in a
permutation $\sigma \in \SG_n$ if there exist $1
\leqslant i_1 \leqslant \dots \leqslant i_j \leqslant n$ such that
$pack(\sigma_{i_1} \dots \sigma_{i_j}) = \tau$. We say that a
permutation $\sigma$ avoids the pattern $\tau$ if $\tau$ does not
occur in $\sigma$. We will denote by $\SG_n(\tau)$ the set of permutations
in $\SG_n$ avoiding $\tau$. This paper presents some combinatorial
results concerning the generating functions
\begin{equation}
Q_{132}^{(a,b,c,d)}(t, x) := 1 + \sum_{n \geqslant 1} t^n Q_{n, 132}^{(a,b,c,d)}(x)
\end{equation}
where for any $a,b,c,d \in \NN$,
\begin{equation}
Q_{n, 132}^{(a,b,c,d)}(x) := \sum_{\sigma \in \SG_n(132)} x^{mmp(a,b,c,d)(\sigma)}.
\end{equation}
More precisely, we give a combinatorial interpretation of each
coefficient of these series when exactly one value among $a,b,c,d$ is
non zero. Not that, as explained in~\cite{KRT}, $Q_{132}^{(0,0,0,k)}(t,
x) = Q_{132}^{(0,k,0,0)}(t, x)$, so that we shall not consider the
forth quadrant.

%%%%%%%%%%%%%%%%%%%%%%%%%%%%%%%%%%%%%%%%%%%%%%%%%%%%%%%%%%%%%%%%%%%%%%%%%%%%%%%
%%   SECTION MARK
%%%%%%%%%%%%%%%%%%%%%%%%%%%%%%%%%%%%%%%%%%%%%%%%%%%%%%%%%%%%%%%%%%%%%%%%%%%%%%%
\section{The quadrant I: the series $Q_{132}^{(\ell,0,0,0)}$}
\label{sec:k000}

The marked mesh pattern $MMP(\ell,0,0,0)$ splits the $132$-avoiding
permutations according to the number of values having at least $k$ greater
values on their right. The following table displays the first values of
$Q_{132}^{(\ell,0,0,0)}|_{t^nx^k}$:
\begin{equation}
\begin{array}{|c|ccccc|cccc|ccc|cc|c|}
\hline
              & \multicolumn{5}{c|}{\ell = {\bf 1}} & \multicolumn{4}{c|}{\ell = {\bf 2}} & \multicolumn{3}{c|}{\ell = {\bf 3}} & \multicolumn{2}{c|}{\ell = {\bf 4}} & \multicolumn{1}{c|}{\ell = {\bf 5}}\\ \hline
n \backslash k & {\bf 0} & {\bf 1} & {\bf 2} & {\bf 3} & {\bf 4} & {\bf 0} & {\bf 1} & {\bf 2} & {\bf 3} & {\bf 0} & {\bf 1} & {\bf 2} & {\bf 0} & {\bf 1} & {\bf 0} \\ \hline
{\bf 1}        & 1 &   &   &   &   & 1 &   &   &   & 1 &   &   & 1 &   & 1 \\ \hline 
{\bf 2}        & 1 & 1 &   &   &   & 2 &   &   &   & 2 &   &   & 2 &   & 2 \\ \hline 
{\bf 3}        & 1 & 2 & 2 &   &   & 4 & 1 &   &   & 5 &   &   & 5 &   & 5 \\ \hline 
{\bf 4}        & 1 & 3 & 5 & 5 &   & 8 & 4 & 2 &   &13 & 1 &   &14 &   &14 \\ \hline 
{\bf 5}        & 1 & 4 & 9 &14 &14 &16 &12 & 9 & 5 &34 & 6 & 2 &41 & 1 &42 \\ \hline 
\end{array}
\end{equation}

A lot of different combinatorial families are counted by the Catalan
numbers, and here Dyck paths appears to be the objects having a nice
behavior with the statistic associated with the pattern $MMP(\ell,0,0,0)$.

The Catalan triangle $(C_{n,k})_{1 \leqslant k \leqslant n}$ is (A009766 of~\cite{Sloane}):
\begin{equation}
\label{Tri-Catalan}
\begin{array}{c|ccccccccccccc}
n\backslash k& 1 & 2 & 3 & 4 & 5 & 6 & 7 & 8 \\
\hline
 1 & 1 &   &   &   &   &    &    &    \\
 2 & 1 & 1 &   &   &   &    &    &    \\
 3 & 1 & 2 & 2 &   &   &    &    &    \\
 4 & 1 & 3 & 5 & 5 &   &    &    &    \\
 5 & 1 & 4 & 9 & 14& 14&    &    &    \\
 6 & 1 & 5 & 14& 28& 42& 42 &    &    \\
 7 & 1 & 6 & 20& 48& 90&132 &132 &    \\
 8 & 1 & 7 & 27& 75&165&297 &429 & 429\\
%\hline
\end{array}
\end{equation}
There is a nice formula for computing directly any coefficient
\begin{equation}
C_{n,k} = \frac{(n+k)!(n-k+1)}{k!(n+1)!}.
\end{equation}

We recall that a Dyck path with $2n$ steps is a path on the square lattice with steps
$(1, 1)$ or $(1, -1)$ from $(0, 0)$ to $(2n, 0)$ that never falls
below the x-axis.

\begin{theorem}
\label{thm-k000}
$Q_{132}^{(\ell,0,0,0)}|_{t^nx^k}$ counts the number of Dyck paths
  with $2n$ steps having exactly $k$ steps $(1,-1)$ ending at height
  greater than or equal to $\ell$.

\begin{proof}
This can be seen thanks to Krattenthaler's
bijection~\cite{Krat_byj} between $132$-avoiding permutations and Dyck
paths. The bijection consists in starting from the empty path and
constructing progressively a Dyck path by a left to right reading of
the permutation $\sigma = \sigma_1 \dots \sigma_n$. For each value
$\sigma_i$, we complete the path with some increasing
steps followed by a single decreasing step so that the height of the
path at this actual point is equal to the number of remaining values
$\sigma_j$ for $j>i$ greater than $\sigma_i$ (see
Figure~\ref{krat_dyck}).
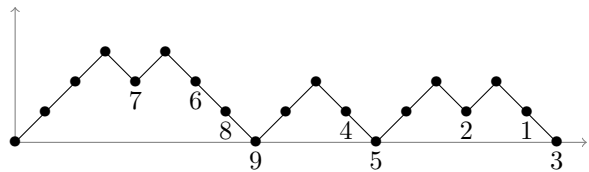
\begin{figure}[h]
  \centering
\begin{tikzpicture}[scale=0.4]
  \draw[->,gray] (0,0) -- (19,0);
  \draw[->,gray] (0,0) -- (0,4.5);

  \draw (0,0) node {$\bullet$};  
  \draw (0,0) -- (1,1);
  \draw (1,1) node {$\bullet$};
  \draw (1,1) -- (2,2);
  \draw (2,2) node {$\bullet$};
  \draw (2,2) -- (3,3);
  \draw (3,3) node {$\bullet$};
  \draw (3,3) -- (4,2);
  \draw (4,2) node {$\bullet$};
  \draw[below] (4,2) node {$7$};
  \draw (4,2) -- (5,3);
  \draw (5,3) node {$\bullet$};
  \draw (5,3) -- (6,2);
  \draw (6,2) node {$\bullet$};
  \draw[below] (6,2) node {$6$};
  \draw (6,2) -- (7,1);
  \draw (7,1) node {$\bullet$};
  \draw[below] (7,1) node {$8$};
  \draw (7,1) -- (8,0);
  \draw (8,0) node {$\bullet$};
  \draw[below] (8,0) node {$9$};
  \draw (8,0) -- (9,1);
  \draw (9,1) node {$\bullet$};
  \draw (9,1) -- (10,2);
  \draw (10,2) node {$\bullet$};
  \draw (10,2) -- (11,1);
  \draw (11,1) node {$\bullet$};
  \draw[below] (11,1) node {$4$};
  \draw (11,1) -- (12,0);
  \draw (12,0) node {$\bullet$};
  \draw[below] (12,0) node {$5$};
  \draw (12,0) -- (13,1);
  \draw (13,1) node {$\bullet$};
  \draw (13,1) -- (14,2);
  \draw (14,2) node {$\bullet$};
  \draw (14,2) -- (15,1);
  \draw (15,1) node {$\bullet$};
  \draw[below] (15,1) node {$2$};
  \draw (15,1) -- (16,2);
  \draw (16,2) node {$\bullet$};
  \draw (16,2) -- (17,1);
  \draw (17,1) node {$\bullet$};
  \draw[below] (17,1) node {$1$};
  \draw (17,1) -- (18,0);
  \draw (18,0) node {$\bullet$};
  \draw[below] (18,0) node {$3$};
\end{tikzpicture}
\caption{Dyck path obtained with the Krattenthaler's bijection over
  $\sigma = 768945213$.}~\label{krat_dyck}
\end{figure}
On the permutation $\sigma$ of Figure~\ref{mmp_draw}, $7$ sees two
greater values on its right, and so does $6$. $8$ sees only the $9$ which
itself cannot see any greater value as it is the highest. The proof is
now immediate since Krattenthaler prove that this construction
constitutes a bijection. By construction, $mmp(\ell,0,0,0)$ is equal to
the number of points at height greater or equal to $\ell$ reached by a
decreasing step. This gives another proof of Theorem 2 of~\cite{KRT}.
\end{proof}
\end{theorem}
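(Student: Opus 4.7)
The plan is to reduce the theorem to a height statistic on Dyck paths via Krattenthaler's bijection $\phi$ between $\SG_n(132)$ and Dyck paths of length $2n$. The key structural property I would exploit is the following: when $\phi$ processes $\sigma_i$ from left to right, it appends a (possibly empty) run of up-steps followed by exactly one down-step, and this down-step terminates at height $k_i := \#\{ j > i : \sigma_j > \sigma_i \}$. This ties the $i$-th entry of $\sigma$ to a unique down-step of $\phi(\sigma)$, which I would check by a direct induction on $i$ using the recursive description of the algorithm; the bijectivity of $\phi$ itself is classical \cite{Krat_byj}.

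Second, I would translate the pattern condition into the Dyck-path language. By definition, $\sigma_i$ matches $MMP(\ell,0,0,0)$ if and only if at least $\ell$ points of $G(\sigma)$ lie in quadrant I of the coordinate system centered at $(i, \sigma_i)$, which is precisely the condition $k_i \geq \ell$. Combined with the height-tracking property above, this shows that $\sigma_i$ matches $MMP(\ell,0,0,0)$ exactly when the down-step associated with position $i$ ends at height at least $\ell$. Therefore
\begin{equation*}
mmp(\ell,0,0,0)(\sigma) = \#\{ \text{down-steps of } \phi(\sigma) \text{ ending at height} \geq \ell \},
\end{equation*}
and summing the weight $x^{mmp(\ell,0,0,0)(\sigma)}$ over $\sigma \in \SG_n(132)$ transports to the stated interpretation of $Q_{132}^{(\ell,0,0,0)}|_{t^n x^k}$.

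The main obstacle is rigorously verifying the height-tracking invariant, since this is the sole nontrivial input; once it is secured, the rest of the argument is a direct dictionary between mesh-pattern occurrences and Dyck-path statistics. Because this invariant is essentially built into Krattenthaler's algorithm, I would keep the proof succinct by citing the original construction for bijectivity and focus the exposition on the statistic-preservation step, which is already transparent on the example of Figure~\ref{krat_dyck}: the nine down-steps there land at heights $2,2,1,0,1,0,1,1,0$, matching exactly the quadrant-I counts of the entries of $\sigma = 768945213$.
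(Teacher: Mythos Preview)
Your proposal is correct and follows essentially the same route as the paper: both arguments invoke Krattenthaler's bijection, isolate the height-tracking property that the down-step associated with $\sigma_i$ ends at height $\#\{j>i:\sigma_j>\sigma_i\}$, and then read the $MMP(\ell,0,0,0)$ condition as ``down-step ending at height $\geq \ell$.'' Your write-up is in fact a bit more explicit about the invariant and its verification, but the strategy and the worked example coincide with the paper's proof.
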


As the number of increasing steps from height at least $1$ in Dyck paths
are counted by the Catalan triangle, we deduce the following
corollary.

\begin{corollary}
$Q_{132}^{(1,0,0,0)}|_{t^nx^k}$ is equal to the value $C_{n,k}$ of the
  Catalan triangle.
\end{corollary}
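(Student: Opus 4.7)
The plan is to combine Theorem \ref{thm-k000} (specialized to $\ell = 1$) with a classical enumeration of Dyck paths by their number of returns to the $x$-axis.

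First, I would apply Theorem \ref{thm-k000}: the coefficient $Q_{132}^{(1,0,0,0)}|_{t^n x^k}$ equals the number of Dyck paths of length $2n$ having exactly $k$ down-steps whose endpoint has height at least $1$.

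Second, I would rewrite this statistic by a trivial complementation. Every Dyck path of length $2n$ has $n$ down-steps in total, and each of them ends either at height $0$ or at some strictly positive height. Therefore having exactly $k$ down-steps ending at height $\geq 1$ is equivalent to having exactly $n-k$ down-steps ending at height $0$, i.e.\ exactly $n-k$ returns of the path to the $x$-axis.

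Finally, I would invoke the well-known fact that the number of Dyck paths of length $2n$ with exactly $m$ returns to the $x$-axis equals the ballot number $\tfrac{m}{n}\binom{2n-m-1}{n-m}$. This is standardly proved either by the Cycle Lemma, or by a first-return decomposition $P = U P_1 D P_2$ (with $P_1, P_2$ Dyck paths) which relates the associated generating function to the Catalan series. Setting $m = n-k$ yields $\tfrac{n-k}{n}\binom{n+k-1}{k}$, the general entry $C_{n,k}$ of the Catalan triangle, closing the argument. The main (and only non-formal) step is this last identification; the two preceding reductions are purely bookkeeping on top of Theorem \ref{thm-k000}.
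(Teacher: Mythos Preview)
Your argument is correct and follows the same route as the paper. The paper's justification is a single sentence: it invokes Theorem~\ref{thm-k000} at $\ell=1$ and then quotes the classical fact that the distribution of up-steps starting from height~$\geq 1$ (equivalently, of down-steps ending at height~$\geq 1$, equivalently, of $n$ minus the number of returns) over Dyck paths of length $2n$ is given by the Catalan triangle. You make exactly this identification explicit, passing through the complementary statistic ``number of returns'' and the ballot-number formula; this is just an unpacking of the same well-known enumeration.
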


This result is a refinement of Theorem 5 of~\cite{KRT}.

To illustrate Theorem~\ref{thm-k000}, here is an example with $\ell =
2$ and $n = 4$. We have $Q_{4, 132}^{(2,0,0,0)}(x) = 8 + 4x + 2x^2$
and Figure~\ref{dyck_4} shows the fourteen Dyck paths of length $8$
with marked decreasing steps ending at height greater than or equal to
$2$. We have eight Dyck paths under the horizontal line at height $2$,
four paths containing a single decreasing step ending at height $2$,
and two last paths containing $2$ such decreasing steps.

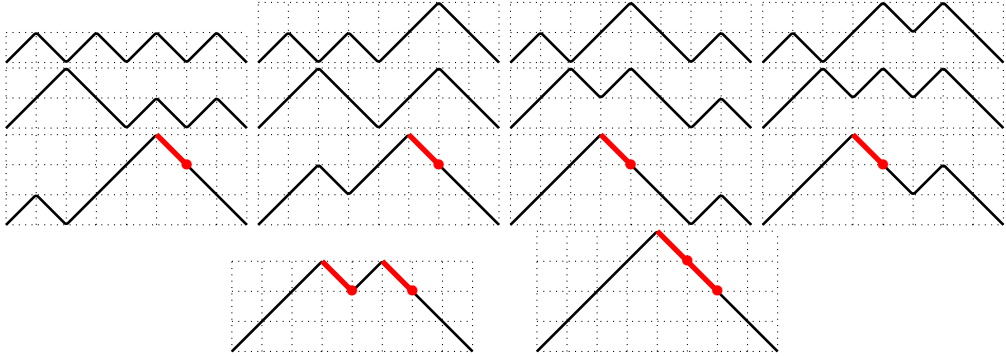
\begin{figure}
  \begin{tikzpicture}[scale=0.4]
  \draw[dotted] (0, 0) grid (8, 1);
  \draw[rounded corners=1, color=black, line width=1] (0, 0) -- (1, 1) -- (2, 0) -- (3, 1) -- (4, 0) -- (5, 1) -- (6, 0) -- (7, 1) -- (8, 0);
  \end{tikzpicture}
\begin{tikzpicture}[scale=0.4]
  \draw[dotted] (0, 0) grid (8, 2);
  \draw[rounded corners=1, color=black, line width=1] (0, 0) -- (1, 1) -- (2, 0) -- (3, 1) -- (4, 0) -- (5, 1) -- (6, 2) -- (7, 1) -- (8, 0);
\end{tikzpicture}
\begin{tikzpicture}[scale=0.4]
  \draw[dotted] (0, 0) grid (8, 2);
  \draw[rounded corners=1, color=black, line width=1] (0, 0) -- (1, 1) -- (2, 0) -- (3, 1) -- (4, 2) -- (5, 1) -- (6, 0) -- (7, 1) -- (8, 0);
\end{tikzpicture}
\begin{tikzpicture}[scale=0.4]
  \draw[dotted] (0, 0) grid (8, 2);
  \draw[rounded corners=1, color=black, line width=1] (0, 0) -- (1, 1) -- (2, 0) -- (3, 1) -- (4, 2) -- (5, 1) -- (6, 2) -- (7, 1) -- (8, 0);
\end{tikzpicture}
\begin{tikzpicture}[scale=0.4]
  \draw[dotted] (0, 0) grid (8, 2);
  \draw[rounded corners=1, color=black, line width=1] (0, 0) -- (1, 1) -- (2, 2) -- (3, 1) -- (4, 0) -- (5, 1) -- (6, 0) -- (7, 1) -- (8, 0);
\end{tikzpicture}
\begin{tikzpicture}[scale=0.4]
  \draw[dotted] (0, 0) grid (8, 2);
  \draw[rounded corners=1, color=black, line width=1] (0, 0) -- (1, 1) -- (2, 2) -- (3, 1) -- (4, 0) -- (5, 1) -- (6, 2) -- (7, 1) -- (8, 0);
\end{tikzpicture}
\begin{tikzpicture}[scale=0.4]
  \draw[dotted] (0, 0) grid (8, 2);
  \draw[rounded corners=1, color=black, line width=1] (0, 0) -- (1, 1) -- (2, 2) -- (3, 1) -- (4, 2) -- (5, 1) -- (6, 0) -- (7, 1) -- (8, 0);
\end{tikzpicture}
\begin{tikzpicture}[scale=0.4]
  \draw[dotted] (0, 0) grid (8, 2);
  \draw[rounded corners=1, color=black, line width=1] (0, 0) -- (1, 1) -- (2, 2) -- (3, 1) -- (4, 2) -- (5, 1) -- (6, 2) -- (7, 1) -- (8, 0);
\end{tikzpicture}
\begin{tikzpicture}[scale=0.4]
  \draw[dotted] (0, 0) grid (8, 3);
  \draw[rounded corners=1, color=black, line width=1] (0, 0) -- (1, 1) -- (2, 0) -- (3, 1) -- (4, 2) -- (5, 3) -- (6, 2) -- (7, 1) -- (8, 0);
  \draw[rounded corners=1, color=red, line width=2] (5, 3) -- (6, 2);
  \draw[color=red] (6,2) node {$\bullet$};
\end{tikzpicture}
\begin{tikzpicture}[scale=0.4]
  \draw[dotted] (0, 0) grid (8, 3);
  \draw[rounded corners=1, color=black, line width=1] (0, 0) -- (1, 1) -- (2, 2) -- (3, 1) -- (4, 2) -- (5, 3) -- (6, 2) -- (7, 1) -- (8, 0);
  \draw[rounded corners=1, color=red, line width=2] (5, 3) -- (6, 2);
  \draw[color=red] (6,2) node {$\bullet$};
\end{tikzpicture}
\begin{tikzpicture}[scale=0.4]
  \draw[dotted] (0, 0) grid (8, 3);
  \draw[rounded corners=1, color=black, line width=1] (0, 0) -- (1, 1) -- (2, 2) -- (3, 3) -- (4, 2) -- (5, 1) -- (6, 0) -- (7, 1) -- (8, 0);
  \draw[rounded corners=1, color=red, line width=2] (3, 3) -- (4, 2);
  \draw[color=red] (4,2) node {$\bullet$};
\end{tikzpicture}
\begin{tikzpicture}[scale=0.4]
  \draw[dotted] (0, 0) grid (8, 3);
  \draw[rounded corners=1, color=black, line width=1] (0, 0) -- (1, 1) -- (2, 2) -- (3, 3) -- (4, 2) -- (5, 1) -- (6, 2) -- (7, 1) -- (8, 0);
  \draw[rounded corners=1, color=red, line width=2] (3, 3) -- (4, 2);
  \draw[color=red] (4,2) node {$\bullet$};
\end{tikzpicture}
\begin{tikzpicture}[scale=0.4]
  \draw[dotted] (0, 0) grid (8, 3);
  \draw[rounded corners=1, color=black, line width=1] (0, 0) -- (1, 1) -- (2, 2) -- (3, 3) -- (4, 2) -- (5, 3) -- (6, 2) -- (7, 1) -- (8, 0);
  \draw[rounded corners=1, color=red, line width=2] (5, 3) -- (6, 2);
  \draw[rounded corners=1, color=red, line width=2] (3, 3) -- (4, 2);
  \draw[color=red] (4,2) node {$\bullet$};
  \draw[color=red] (6,2) node {$\bullet$};
\end{tikzpicture}
\qquad
\begin{tikzpicture}[scale=0.4]
  \draw[dotted] (0, 0) grid (8, 4);
  \draw[rounded corners=1, color=black, line width=1] (0, 0) -- (1, 1) -- (2, 2) -- (3, 3) -- (4, 4) -- (5, 3) -- (6, 2) -- (7, 1) -- (8, 0);
  \draw[rounded corners=1, color=red, line width=2] (4, 4) -- (5, 3) -- (6, 2);
  \draw[color=red] (5,3) node {$\bullet$};
  \draw[color=red] (6,2) node {$\bullet$};
\end{tikzpicture}
  \caption{Decreasing steps ending at height greater than or equal to $2$ in Dyck paths of length $8$.}~\label{dyck_4}
\end{figure}

%%%%%%%%%%%%%%%%%%%%%%%%%%%%%%%%%%%%%%%%%%%%%%%%%%%%%%%%%%%%%%%%%%%%%%%%%%%%%%%
%%   SECTION MARK
%%%%%%%%%%%%%%%%%%%%%%%%%%%%%%%%%%%%%%%%%%%%%%%%%%%%%%%%%%%%%%%%%%%%%%%%%%%%%%%
\section{The quadrant II: the series $Q_{132}^{(0,\ell,0,0)}$}
\label{sec:0k00}

The quadrant II is now about to classify $132$-avoiding permutations
according to the number of values having $k$ greater values on their
left. We will see that this statistic can be read on first values
only. The following table displays the first values of
$Q_{132}^{(0,\ell,0,0)}|_{t^nx^k}$:

\begin{equation}
\begin{array}{|c|ccccc|cccc|ccc|cc|c|}
\hline
              & \multicolumn{5}{c|}{\ell = {\bf 1}} & \multicolumn{4}{c|}{\ell = {\bf 2}} & \multicolumn{3}{c|}{\ell = {\bf 3}} & \multicolumn{2}{c|}{\ell = {\bf 4}} & \multicolumn{1}{c|}{\ell = {\bf 5}}\\ \hline
n \backslash k & {\bf 0} & {\bf 1} & {\bf 2} & {\bf 3} & {\bf 4} & {\bf 0} & {\bf 1} & {\bf 2} & {\bf 3} & {\bf 0} & {\bf 1} & {\bf 2} & {\bf 0} & {\bf 1} & {\bf 0} \\ \hline
{\bf 1}        & 1 &   &   &   &   & 1 &   &   &   & 1 &   &   & 1 &   & 1 \\ \hline 
{\bf 2}        & 1 & 1 &   &   &   & 2 &   &   &   & 2 &   &   & 2 &   & 2 \\ \hline 
{\bf 3}        & 1 & 2 & 2 &   &   & 3 & 2 &   &   & 5 &   &   & 5 &   & 5 \\ \hline 
{\bf 4}        & 1 & 3 & 5 & 5 &   & 4 & 6 & 4 &   & 9 & 5 &   &14 &   &14 \\ \hline 
{\bf 5}        & 1 & 4 & 9 &14 &14 & 5 &12 &15 &10 &14 &18 &10 &28 &14 &42 \\ \hline 
\end{array}
\end{equation}

Let $n$ be a positive integer, we will call non-decreasing parking
functions of length $n$ the set of all non-decreasing functions $f$
from $1, 2, \dots, n$ into $1, 2, \dots, n$ such that for all $1
\leqslant i \leqslant n: f(i) \leqslant i$. It is well-known that
their are counted by the Catalan numbers. It is also known that they
are counted by the Catalan triangle~(see (\ref{Tri-Catalan})) if we refine by
the value of $f(n)$ and also counted by Narayana
numbers~(see (\ref{nara-Catalan})) if we refine by the number of different
values of $f$.

%%%%%%%%%%%%%%%%%%%%%%%%%%%%%%%%%%%%%%%%%%%%%%%%%%%%%%%%%%%%%%%%%
\subsection{The general result}

\begin{proposition}
\label{sigma1}
Let $\sigma$ be a $132$-avoiding permutation. The statistic
$mmp(0,1,0,0)$ only depends on the first value of $\sigma$. More
precisely, $mmp(0,1,0,0)(\sigma) = \sigma_1 - 1$.

\begin{proof}
Let $\sigma = \sigma_1 \dots \sigma_n$ be a $132$-avoiding
permutation. The $\sigma_1 - 1$ values smaller than $\sigma_1$ are to
the right of the first position thus see at least one greater value on
their left, hence $mmp(0,1,0,0)(\sigma) \geqslant \sigma_1 - 1$. The
$n - \sigma_1$ values greater than $\sigma_1$ must be ordered
increasingly otherwise such two values together with the first value
would form a $132$ pattern, so all these values cannot see something
greater left to themselves and $mmp(0,1,0,0)(\sigma) \leqslant
\sigma_1 - 1$.
\end{proof}
\end{proposition}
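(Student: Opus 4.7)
The plan is to partition the positions of $\sigma = \sigma_1 \ldots \sigma_n$ into three groups---position $1$ itself, positions carrying values strictly below $\sigma_1$, and positions carrying values strictly above $\sigma_1$---and to prove matching upper and lower bounds of $\sigma_1 - 1$ on $mmp(0,1,0,0)(\sigma)$ by handling each group separately.

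For the lower bound, I would observe that the $\sigma_1 - 1$ entries strictly smaller than $\sigma_1$ all lie in positions $i > 1$, and that $\sigma_1$ itself sits to their left with a strictly greater value. Hence each such position contains at least one point of $G(\sigma)$ in its Quadrant~II, so each contributes one unit to the statistic. This immediately yields $mmp(0,1,0,0)(\sigma) \geq \sigma_1 - 1$.

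For the matching upper bound, the key step is to extract from the $132$-avoidance hypothesis that the entries greater than $\sigma_1$ appear in increasing order from left to right. Indeed, if two such entries $\sigma_j$ and $\sigma_k$ with $1 < j < k$ satisfied $\sigma_j > \sigma_k$, then the triple $(\sigma_1, \sigma_j, \sigma_k)$ would realize the pattern $132$, since $\sigma_1 < \sigma_k < \sigma_j$. Consequently, for any entry $\sigma_i > \sigma_1$, every previous entry is either $\sigma_1$ (which is smaller than $\sigma_i$) or an earlier entry of the same group (smaller than $\sigma_i$ by monotonicity), so no such entry contributes to $mmp(0,1,0,0)$. Adding the trivial observation that position $1$ also contributes nothing since it has no entries to its left, I obtain $mmp(0,1,0,0)(\sigma) \leq \sigma_1 - 1$, and the two bounds combine to the claim.

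The proof is quite short, and the only substantive step is the monotonicity argument for entries above $\sigma_1$; everything else reduces to unwinding the definition of Quadrant~II. There is no real obstacle beyond spelling out the $132$ witness carefully.
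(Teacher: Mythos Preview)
Your argument is essentially the paper's own proof: both establish the lower bound by noting that every value below $\sigma_1$ has $\sigma_1$ to its left, and the upper bound by showing that the values above $\sigma_1$ must occur in increasing order (else $\sigma_1$ together with an inversion among them gives a $132$). One small wording slip: in your upper-bound paragraph you say that for $\sigma_i>\sigma_1$ ``every previous entry is either $\sigma_1$ \ldots\ or an earlier entry of the same group,'' but a previous entry could also belong to the below-$\sigma_1$ group; this is harmless since such an entry is a fortiori smaller than $\sigma_i$, but you should mention that third case explicitly.
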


This first proposition is the key for the understanding of the
combinatorics attached to quadrant II. We will also understand it as
follows: if a position $i$ matches $MMP(0,1,0,0)$ in $\sigma$,
thus $\sigma_1 \geqslant \sigma_i$. The second idea consists in
applying this first proposition by induction. For that purpose, let us remove the
first value of $\sigma$ and \emph{pack} the remaining values. The
new first value now gives us information on values having two greater
values on their left.

\begin{theorem}
\label{thm-0k00}
$Q_{132}^{(0,\ell,0,0)}|_{t^nx^k}$ is equal to the number of
  $132$-avoiding permutations in $\SG_n$ such that the \emph{packed}
  of their suffix of length $n + 1 -\ell$ begins with value $k+1$.

\begin{proof}
Another interpretation of Proposition~\ref{sigma1} consists in noticing
that if a value $\sigma_i$ has a greater value on its left, the first
value $\sigma_1$ of the permutation is also greater than
$\sigma_i$. By recurrence, a value $\sigma_i$ having $\ell$ greater
values on their left is in fact smaller than the first $\ell$ values of
$\sigma$. This is due to the fact that if
$\sigma_{\ell} < \sigma_{\ell+1}$, then $\sigma_{\ell+1} =
\sigma_{\ell}+1$ ($132$ avoidance implies that the remaining greater
values must be ordered) otherwise $\sigma_{\ell} > \sigma_{\ell+1}$
and a value seeing $\sigma_{\ell+1}$ greater on its left sees also
$\sigma_{\ell}$ and by induction all previous values. Now, as having
$\ell$ greater values on the left is equivalent to be smaller than 
$\sigma_\ell$, this can be checked using the Proposition~\ref{sigma1}
on the suffix of length $n+1-\ell$ that we pack by commodity
allowing us to keep $132$-permutations (on smaller length when $\ell \geqslant 2$).
\end{proof}
\end{theorem}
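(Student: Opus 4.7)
The plan is to prove, by induction on $\ell$, the stronger assertion that for every $132$-avoiding $\sigma \in \SG_n$ with $n \geqslant \ell$,
$$mmp(0,\ell,0,0)(\sigma) = \bigl(\text{first letter of } \pack(\sigma_\ell \sigma_{\ell+1} \cdots \sigma_n)\bigr) - 1.$$
Grouping the $132$-avoiding permutations in $\SG_n$ according to this first letter immediately recovers the theorem's counting statement, so it suffices to establish this identity. The base case $\ell = 1$ is precisely Proposition~\ref{sigma1}.

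The heart of the inductive step is the following consequence of $132$-avoidance: whenever $\sigma_1 < \sigma_i$ for some $i > 1$, we must have $\sigma_j < \sigma_i$ for all $1 < j < i$, since otherwise the triple $(\sigma_1, \sigma_j, \sigma_i)$ at positions $1 < j < i$ would form a $132$ pattern. Equivalently, any $\sigma_i$ having at least one greater value to its left must be smaller than $\sigma_1$ itself. This yields, for every $\ell \geqslant 1$ and every $i \geqslant 2$, the equivalence
$$\sigma_i \text{ has } \geqslant \ell+1 \text{ greater values on its left in } \sigma \iff \sigma_i \text{ has } \geqslant \ell \text{ greater values among } \sigma_2, \ldots, \sigma_{i-1},$$
the reverse implication being free because $\sigma_1 > \sigma_i$ is automatic as soon as there is any greater value to the left.

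Next, I would set $\sigma' := \pack(\sigma_2 \cdots \sigma_n)$, which is still $132$-avoiding and whose internal relative order is exactly that of $\sigma_2, \ldots, \sigma_n$. Hence the right-hand side of the above equivalence rewrites as ``the corresponding letter in $\sigma'$ has at least $\ell$ greater values on its left'', giving
$$mmp(0,\ell+1,0,0)(\sigma) = mmp(0,\ell,0,0)(\sigma').$$
Iterating this identity $\ell - 1$ times, and using that $\pack$ commutes with taking suffixes in the obvious sense, reduces $mmp(0,\ell,0,0)$ on $\sigma$ to $mmp(0,1,0,0)$ applied to $\pack(\sigma_\ell \cdots \sigma_n)$, to which the base case applies and completes the induction.

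The step I expect to require the most attention is the $132$-avoidance observation above: it is precisely this single structural fact that collapses the a priori subtle recursion---\emph{why} should discarding the first letter decrement the statistic by exactly one level?---into the clean identity driving the induction. Once that observation is in hand, both the iteration and the final bookkeeping about how $\pack$ interacts with taking suffixes are routine, and the degenerate case $n < \ell$ (where the statistic vanishes trivially) needs only a one-line check.
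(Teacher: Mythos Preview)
Your proof is correct and follows essentially the same approach as the paper: both rest on the single structural observation that, in a $132$-avoiding permutation, any $\sigma_i$ with some greater value to its left is already smaller than $\sigma_1$, and both use this to reduce the $\ell$-statistic to Proposition~\ref{sigma1} applied to the packed suffix of length $n+1-\ell$. The paper argues directly that ``having $\ell$ greater values to the left'' is equivalent to ``being smaller than each of $\sigma_1,\ldots,\sigma_\ell$'', while you organize the same idea as the clean recursion $mmp(0,\ell+1,0,0)(\sigma)=mmp(0,\ell,0,0)(\pack(\sigma_2\cdots\sigma_n))$ and iterate; your version is the more carefully stated of the two, but the content is the same.
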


For example, with $\ell=2$ and $n=4$, we have
$Q_{4,132}^{(0,2,0,0)}(x) = 4+6x+4x^2$. The following table displays
the fourteen permutations of length $14$ avoiding $132$, their suffix of
length $4+1-2 = 3$, the \emph{packed} of the suffix and the statistic
$mmp(0,2,0,0)$. The permutation are enumerated by lexicographic order,
however the reader can check that four of them contain no value having
two greater values on their left, six contain a single such value and
the last four contains two positions matching with $MMP(0,2,0,0)$.

\begin{equation}
  \begin{array}{|c|c|c|c|}
    \hline
    \sigma \text{ avoiding $132$} & \text{suffix of $\sigma$ of length $3$} & \text{packed of the suffix} & mmp(0,2,0,0)(\sigma) \\ \hline
    1234 & 234 & {\bf 1}23 & 0 \\
    2134 & 134 & {\bf 1}23 & 0 \\
    2314 & 314 & {\bf 2}13 & 1 \\
    2341 & 341 & {\bf 2}31 & 1 \\
    3124 & 124 & {\bf 1}23 & 0 \\
    3214 & 214 & {\bf 2}13 & 1 \\
    3241 & 241 & {\bf 2}31 & 1 \\
    3412 & 412 & {\bf 3}12 & 2 \\
    3421 & 421 & {\bf 3}21 & 2 \\
    4123 & 123 & {\bf 1}23 & 0 \\
    4213 & 213 & {\bf 2}13 & 1 \\
    4231 & 231 & {\bf 2}31 & 1 \\
    4312 & 312 & {\bf 3}12 & 2 \\
    4321 & 321 & {\bf 3}21 & 2 \\ \hline
  \end{array}
\end{equation}

%%%%%%%%%%%%%%%%%%%%%%%%%%%%%%%%%%%%%%%%%%%%%%%%%%%%%%%%%%%%%%%%%
\subsection{A new bijection between $132$-avoiding permutations and non-decreasing \\ parking functions}

Let $\sigma \in \SG_n$ be a $132$-avoiding permutation. We set
\begin{equation}
\phi(\sigma) := (mmp(0,n,0,0)+1, mmp(0,n-1,0,0)+1, \dots mmp(0,1,0,0)+1)
\end{equation}

\begin{theorem}
Let $n$ be a positive integer. $\phi$ establishes a bijection between
$132$-avoiding permutations of length $n$ and non-decreasing parking
functions of length $n$.

\begin{proof}
First, we check that $\phi$ is a map that builds a
non-decreasing parking function on length $n$. This is relatively easy
to see as a value having $k$ greater values on its left must be at
position at least $k+1$ so that $mmp(0,k,0,0) + 1 \leqslant k$. 
The fact that $\phi(\sigma)$ is non-decreasing comes from the fact
that values having $k$ greater values on their left thus have $k-1$
greater values on this same side.
We can build the inverse bijection using Proposition~\ref{sigma1}. We
start with the empty permutation, we insert at the left each value
from the non-decreasing parking function by a left to right reading
and we add $1$ to values greater than or equal to the new value
inserted (in this way, we keep a permutation at each step). Let us do
that on the parking function $(1,1,2,4,4,6,6,6,7)$.
\begin{equation}
  \begin{array}{rr}
    1 \text{ is read } & 1 \\
    1 \text{ is read } & 1(1+1) = 12 \\
    2 \text{ is read } & 21(2+1) = 213 \\
    4 \text{ is read } & 4213 \\
    4 \text{ is read } & 4(4+1)213 = 45213 \\
    6 \text{ is read } & 645213 \\
    6 \text{ is read } & 6(6+1)45213 = 6745213 \\
    6 \text{ is read } & 6(6+1)(7+1)45213 = 67845213 \\
    7 \text{ is read } & 76(7+1)(8+1)45213 = 768945213 \\
  \end{array}
\end{equation}
The reader can check that this non-decreasing parking
function corresponds to the permutation from
Figure~\ref{mmp_draw}. Proposition~\ref{sigma1} applied at each
step gives the relation between the $k^{th}$ value and
$mmp(0,k,0,0)(\sigma)$.
\end{proof}
\end{theorem}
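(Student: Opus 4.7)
The plan is to verify the two standard ingredients of a bijection: (i) the map $\phi$ takes values in non-decreasing parking functions of length $n$, and (ii) the left-insertion procedure described in the statement is a two-sided inverse.

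For (i), the non-decreasingness of $\phi(\sigma)$ is immediate from the monotonicity $mmp(0,\ell+1,0,0)(\sigma) \leqslant mmp(0,\ell,0,0)(\sigma)$ (any entry with $\ell+1$ greater values on its left has, \emph{a fortiori}, $\ell$ greater values on its left), combined with the fact that $\phi$ lists the statistics in decreasing $\ell$. For the parking condition $\phi(\sigma)_k \leqslant k$, I would argue positionally: any $\sigma_i$ matching $MMP(0,j,0,0)$ must sit at position $i \geqslant j+1$, so at most $n-j$ indices match, giving $mmp(0,j,0,0)(\sigma) \leqslant n-j$; substituting $j = n-k+1$ yields exactly the required bound on the $k$-th coordinate of $\phi(\sigma)$.

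For (ii), denote by $\sigma^{(k)}$ the word produced after reading the first $k$ entries of the parking function $(a_1,\dots,a_n)$. The main obstacle is an induction on $k$ showing that $\sigma^{(k)}$ is a $132$-avoiding permutation of $\{1,\dots,k\}$ whose first letter is $a_k$. A putative $132$-pattern entirely inside the suffix unbumps (the bumping preserves relative order) to a $132$-pattern in $\sigma^{(k-1)}$, contradicting induction. A $132$-pattern using the newly prepended $a_k$ as its leftmost letter produces, after unbumping, two entries $w_2 > w_3 \geqslant a_k$ in $\sigma^{(k-1)}$ with $w_2$ to the left of $w_3$; combining this with $a_{k-1} \leqslant a_k$ and the fact that $a_{k-1}$ is the first letter of $\sigma^{(k-1)}$ either yields a $132$-pattern $(a_{k-1}, w_2, w_3)$ already present in $\sigma^{(k-1)}$ or forces an integer strictly between $a_k$ and $a_k + 1$, both impossible.

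Once $\sigma^{(k)}$ is under control, the identity $\phi \circ \phi^{-1} = \Id$ follows by iterating Proposition~\ref{sigma1}: the packed suffix of length $n-k+1$ of $\sigma^{(n)}$ is exactly $\sigma^{(n-k+1)}$ by construction, so the analysis in the proof of Theorem~\ref{thm-0k00} together with Proposition~\ref{sigma1} applied to $\sigma^{(n-k+1)}$ gives $mmp(0, n-k+1, 0, 0)(\sigma^{(n)}) + 1 = a_k$, which is the $k$-th coordinate of $\phi(\sigma^{(n)})$. The reverse composition $\phi^{-1} \circ \phi = \Id$ then follows either by the same argument run in the opposite direction, or for free from the equal (Catalan) cardinality of the two sets together with the surjectivity just established.
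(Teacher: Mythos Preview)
Your proof is correct and follows essentially the same strategy as the paper: verify that $\phi$ lands in non-decreasing parking functions via the positional bound and the obvious monotonicity, then describe the left-insertion inverse and invoke Proposition~\ref{sigma1} iteratively to recover each coordinate. The paper's argument is considerably sketchier---it asserts the inverse works without checking that the inserted word stays $132$-avoiding and leaves the parking bound as a one-line remark---whereas you actually carry out the inductive $132$-avoidance check and make the parking inequality explicit; your use of equal Catalan cardinalities to dispose of one composition is a legitimate shortcut the paper does not mention.
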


Classified along their last part (which corresponds to $mmp(0,1,0,0)$
on the side of $132$-avoiding permutations), non-decreasing parking
function are counted by the Catalan triangle~(see
(\ref{Tri-Catalan})), we thus have the following corollary.

\begin{corollary}
The number of $132$-avoiding permutations containing $k$ values having
a greater value on their left is the coefficient $C_{n,k}$ of the
Catalan triangle: $Q_{132}^{(0,1,0,0)}|_{t^nx^k} = C_{n,k}$.
\end{corollary}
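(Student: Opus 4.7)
The corollary follows essentially immediately from the preceding bijection theorem. By construction $\phi$ sends a $132$-avoiding permutation $\sigma \in \SG_n$ with $mmp(0,1,0,0)(\sigma) = k$ to a non-decreasing parking function of length $n$ whose last entry equals $k+1$. Hence my plan starts with the identity
$$Q_{132}^{(0,1,0,0)}\bigl|_{t^n x^k} \;=\; \#\bigl\{(f_1,\dots,f_n)\text{ non-decreasing parking function with } f_n = k+1\bigr\},$$
which reduces the corollary to enumerating non-decreasing parking functions of length $n$ classified by their last part.

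For this last step I would give a short recursive argument. A non-decreasing parking function $(f_1,\dots,f_n)$ with $f_n = j$ is uniquely determined by its restriction $(f_1,\dots,f_{n-1})$, which is itself a non-decreasing parking function of length $n-1$ subject only to the extra constraint $f_{n-1}\le j$. Summing over the admissible values of $f_{n-1}$ yields
$$C_{n,j} \;=\; \sum_{j'=1}^{\min(j,n-1)} C_{n-1,j'},$$
and by taking first differences this is equivalent to the two-term recurrence $C_{n,j} = C_{n,j-1} + C_{n-1,j}$, the standard Pascal-like relation satisfied by the Catalan triangle displayed in (\ref{Tri-Catalan}). Combined with the trivial base case $C_{1,1}=1$ and the boundary convention $C_{n,n}=C_{n,n-1}$ (automatic here since $f_n\in\{1,\dots,n\}$ and $f_{n-1}\le n-1$), this identifies the count with the announced entry of the triangle.

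The main obstacle is really only notational: one has to line up the indexing convention $(C_{n,k})_{1\le k\le n}$ of (\ref{Tri-Catalan}) with the $0$-indexed statistic $mmp(0,1,0,0)$, which accounts for the shift $j = k+1$ produced by $\phi$. A direct verification against the tabulated values for $n\le 5$ (compare the row of the Catalan triangle with the row of the $\ell=1$ block in the table at the beginning of Section~\ref{sec:0k00}) confirms that the two indexings match as claimed, and the proof is complete.
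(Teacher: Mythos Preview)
Your proof is correct and follows essentially the same route as the paper: both deduce the corollary from the bijection $\phi$ by observing that the last entry of $\phi(\sigma)$ records $mmp(0,1,0,0)(\sigma)+1$, so the distribution is that of non-decreasing parking functions by last part, which gives the Catalan triangle. The only difference is that the paper simply cites this last enumeration as known, whereas you supply a short recurrence argument for it; this is a harmless and welcome elaboration rather than a different approach.
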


%%%%%%%%%%%%%%%%%%%%%%%%%%%%%%%%%%%%%%%%%%%%%%%%%%%%%%%%%%%%%%%%%%%%%%
\subsection{The series $Q_{132}^{(0,k,0,0)}$ at $x=0$}

In the article~\cite{KRT}, the authors obtain by induction that
their function $Q_{132}^{(0,k,0,0)}(t,0)$ satisfies (Thm.~12 p.~24)
\begin{equation}
Q_{132}^{(0,k,0,0)}(t,0) = \frac{1+t \sum_{j=0}^{k-2} C_j t^j (Q_{132}^{(0,k-1-j,0,0)}(t,0) - 1) }{1-t}.
\end{equation}
The first examples are
\begin{equation}
\begin{split}
Q_{132}^{(0,1,0,0)}(t,0) &= \frac{1}{1-t}, \\
Q_{132}^{(0,2,0,0)}(t,0) &= \frac{1-t+t^2}{(1-t)^2}, \\
Q_{132}^{(0,3,0,0)}(t,0) &= \frac{1-2t+2t^2+t^3-t^4}{(1-t)^3}, \\
Q_{132}^{(0,4,0,0)}(t,0) &= \frac{1-3t+4t^2-t^3+3t^4-5t^5+2t^6}{(1-t)^4}.
\end{split}
\end{equation}
It happens that their series is easy to compute directly. In a further
paper, Kitaev and Liese explain that the coefficients are related to
the Catalan triangle~\cite{kitaev_catalan}.
\begin{theorem}
$Q_{n,132}^{(0,k,0,0)}(0)$ is equal to the sum of the $k$ first
  values of the $n^{th}$ row of the Catalan triangle.
\end{theorem}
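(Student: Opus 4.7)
The plan is to translate the statement into one about non-decreasing parking functions (NDPFs) via the bijection $\phi$ of the previous subsection, and then to exploit the self-conjugacy of the staircase partition.

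First, I would use the fact that $\phi$ sends $\sigma$ to the NDPF $(a_1,\dots,a_n)$ with $a_{n+1-\ell} = mmp(0,\ell,0,0)(\sigma) + 1$, so that $Q_{n, 132}^{(0,k,0,0)}(0)$ is precisely the number of NDPFs of length $n$ with $a_{n+1-k} = 1$. Because NDPFs are non-decreasing with entries at least $1$, this condition is in fact equivalent to asking that the whole initial segment $a_1 = \cdots = a_{n+1-k} = 1$.

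Next, I would identify NDPFs of length $n$ with partitions $\lambda$ fitting inside the staircase $\delta_n = (n-1, n-2, \ldots, 1, 0)$ via $\lambda_i := a_{n+1-i} - 1$. Under this identification the condition $a_{n+1-k} = 1$ turns into $\lambda_k = 0$ (i.e.\ $\lambda$ has at most $k-1$ non-zero parts), while the condition $a_n = j$ turns into $\lambda_1 = j-1$ (i.e.\ the largest part of $\lambda$ equals $j-1$).

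The key step will be to notice that $\delta_n$ is self-conjugate, so partition conjugation $\lambda \mapsto \lambda'$ restricts to an involution on the set of partitions contained in $\delta_n$ that swaps the number of non-zero parts with the largest part. This will produce a bijection between NDPFs with $a_{n+1-k} = 1$ and NDPFs with $a_n \leq k$. Splitting the latter according to the value of $a_n \in \{1, \ldots, k\}$ and invoking the preceding corollary, which identifies the number of NDPFs of length $n$ with last part $j$ as the Catalan triangle entry $C_{n,j}$, will then yield the desired identity $Q_{n, 132}^{(0,k,0,0)}(0) = \sum_{j=1}^{k} C_{n,j}$.

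The argument has no real obstacle once one notices the self-conjugate staircase; the only point that will need attention is aligning the various index shifts, so that the initial-segment condition on the NDPF side lines up with a bound on the largest part of the conjugate partition. This is routine bookkeeping.
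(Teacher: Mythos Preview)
Your argument is correct. It is also genuinely different from the paper's own route. The paper does not prove the theorem directly; instead it establishes the recurrence
\[
Q_{n,132}^{(0,k,0,0)}(0)=Q_{n-1,132}^{(0,k,0,0)}(0)+Q_{n,132}^{(0,k-1,0,0)}(0)\qquad(n\ge k),
\]
(with the boundary cases), giving three separate proofs of it --- an analytic one from the functional equation of \cite{KRT}, a bijection on the permutation sets $S(n,k)$ via the position of~$1$, and a verification on the sets $ND(n,k)$ of non-decreasing parking functions of maximum at most~$k$ --- and then identifies this as the recurrence satisfied by the partial row-sums of the Catalan triangle.

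Your proof bypasses the recurrence entirely: through $\phi$ the vanishing of $mmp(0,k,0,0)$ becomes the condition $a_{n+1-k}=1$ on the associated NDPF, and the key observation that the staircase $\delta_n$ is self-conjugate lets you convert ``at most $k-1$ non-zero parts'' into ``largest part at most $k-1$'' by conjugation, i.e.\ into $a_n\le k$. This is a clean involution-based explanation that the paper does not give. What the paper's approach buys, in exchange, is the explicit link with the recursion of \cite{KRT} and the description of $S(n,k)$ via the position of~$1$ (and its Tamari-interval structure), which your argument does not touch. One small remark: what you actually invoke at the end is the sentence just \emph{before} the corollary (that NDPFs with last part $j$ are counted by $C_{n,j}$), rather than the corollary itself; the content is the same, so this is only a matter of citation.
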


From a combinatorics point of view, it is more simple to present
$Q_{n,132}^{(0,k,0,0)}(0)$ as series in $t$ in columns:
\begin{equation}
\label{LesQ}
\begin{array}{c|ccccccccccccc}
n\backslash k
   & 1 & 2 & 3 & 4 & 5 & 6 & 7 & 8 \\
\hline
 1 & 1 & 1 & 1 & 1 & 1 & 1 & 1  &   1 \\
 2 & 1 & 2 & 2 & 2 & 2 & 2 & 2  &   2 \\
 3 & 1 & 3 & 5 & 5 & 5 & 5 & 5  &   5 \\
 4 & 1 & 4 & 9 & 14& 14& 14& 14 &  14 \\
 5 & 1 & 5 & 14& 28& 42& 42& 42 &  42 \\
 6 & 1 & 6 & 20& 48& 90&132& 132& 132 \\
 7 & 1 & 7 & 27& 75&165&297& 429& 429 \\
 8 & 1 & 8 & 35&110&275&572&1001&1430 \\
%\hline
\end{array}
\end{equation}

We thus recognize a simple recurrence
\begin{theorem}
\label{thm-0k00}
We have
\begin{equation}
Q_{n,132}^{(0,k,0,0)}(0) =
\left\{
\begin{array}{lr}
1 & \text{if\ } n = 1 \text{ or } k = 1, \\
Q_{n-1,132}^{(0,k,0,0)}(0) + Q_{n,132}^{(0,k-1,0,0)}(0)
& \text{if\ } n\geq k, \\
Q_{n,132}^{(0,k-1,0,0)}(0)
& \text{if\ } n<k. \\
\end{array}
\right.
\end{equation}
\end{theorem}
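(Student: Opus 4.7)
My plan is to prove the recurrence by working directly with the bijection $\phi$ from the preceding subsection between $132$-avoiding permutations of length $n$ and non-decreasing parking functions of length $n$. Setting $F(n,k) := Q_{n,132}^{(0,k,0,0)}(0)$, I first translate the vanishing condition: since $\phi(\sigma)_{n+1-k} = mmp(0,k,0,0)(\sigma) + 1$ and $\phi(\sigma)$ is non-decreasing with entries at least $1$, demanding $mmp(0,k,0,0)(\sigma) = 0$ is equivalent to demanding that $f_1 = f_2 = \cdots = f_{n+1-k} = 1$, where $f = \phi(\sigma)$. Thus $F(n,k)$ counts non-decreasing parking functions of length $n$ whose first $n+1-k$ entries equal $1$, with the understanding that this prefix condition is vacuous whenever $n+1-k \leq 0$.

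The two base cases and the third line of the recurrence now follow immediately from this reformulation. For $n = 1$ the unique parking function $(1)$ is always counted. For $k = 1$ the condition $f_n = 1$ together with non-decreasingness forces $f$ to be the all-ones parking function. For $n < k$ both $F(n,k)$ and $F(n,k-1)$ carry no prefix constraint, so both equal the total number $C_n$ of non-decreasing parking functions of length $n$.

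The heart of the proof is the main case $n \geq k \geq 2$. I plan to partition the parking functions counted by $F(n,k)$ according to whether $f_{n+2-k} = 1$ or $f_{n+2-k} \geq 2$. In the first alternative the first $n+2-k = n+1-(k-1)$ entries are all $1$, so those parking functions are exactly the ones enumerated by $F(n,k-1)$. For the second alternative I will define the map
\begin{equation*}
\Psi(f) := (\underbrace{1, \ldots, 1}_{n-k \text{ ones}},\, f_{n+2-k} - 1,\, f_{n+3-k} - 1,\, \ldots,\, f_n - 1)
\end{equation*}
from this class to non-decreasing parking functions of length $n-1$ whose first $n-k$ entries equal $1$, i.e., those enumerated by $F(n-1,k)$. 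The inverse inserts an additional leading $1$ at position $n+1-k$ and increments every subsequent entry by $1$, with the hypothesis $f_{n+2-k} \geq 2$ being precisely what keeps the inverse inside the correct fiber. The main technical step will be checking that $\Psi$ respects the parking inequality: at position $i = n-k+j$ with $1 \leq j \leq k-1$, the requirement $\Psi(f)_i \leq i$ becomes $f_{n+1-k+j} - 1 \leq n-k+j$, which is exactly the original inequality on $f$ at position $n+1-k+j$. Summing the two classes yields $F(n,k) = F(n,k-1) + F(n-1,k)$, completing the recurrence.
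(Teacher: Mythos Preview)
Your argument is correct. Of the three proofs the paper gives (an analytic one starting from the recursion of~\cite{KRT}, a bijection on the permutation sets $S(n,k)$ themselves, and one via non-decreasing parking functions), yours is closest in spirit to the third but is not identical to it. The paper describes $F(n,k)$ as the number of non-decreasing parking functions of length $n$ with \emph{maximum} at most $k$ (relying on the preceding Catalan-triangle row-sums theorem) and then splits on whether the last entry equals $k$: if so, delete it to land in $ND(n-1,k)$; if not, one is already in $ND(n,k-1)$. You instead read the condition $mmp(0,k,0,0)(\sigma)=0$ directly off the bijection $\phi$, arriving at the alternative description ``first $n{+}1{-}k$ entries equal $1$'', and split on the value of $f_{n+2-k}$, which leads you to introduce the shift map $\Psi$. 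The trade-off is that your characterization is more self-contained (it needs only $\phi$, not the row-sums theorem), whereas the paper's ``maximum $\le k$'' description turns the recursion into a one-line deletion with no shifting at all. Both decompositions are clean, and your verification that $\Psi$ and its inverse respect the parking inequalities is sound.
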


We can prove this property with at least three different approaches:
the first one is analytic by noticing that Formula (24)
presented by~\cite{KRT} at $x=0$ imply our relations, the second
method builds a bijection between the related sets, the last one shows
that simple combinatorics objets counted by these numbers satisfy the
induction.

%%%%%%%%%%%%%%%%%%%%%%%%%%%%%%%%%%%%%%%%%%%%%%%%%%%%%%%%%%%%%%%%%
\subsubsection{The analytic proof}

In order to simplify the notations, we define $R_n^k =
Q_{n,132}^{(0,k,0,0)}(t,0)$. Here $C_n$ denote the $n^{th}$ Catalan
number.

Formula (24) of~\cite{KRT} in $x=0$ becomes
\begin{equation}
R_n^k = R_{n-1}^k + \sum_{i=1}^{k-1} C_{i-1} R_{n-i}^{k-i}.
\end{equation}

If $n<k$, thus $n-1<k-1$ and $n-i<k-i$, hence by induction on $n+k$ and
using $R_n^n=C_n$, we get $R_n^k=R_n^{k-1}$.

If $n\geq k$, we suppose the theorem true for all pairs $(n',k')$
with $n'+k'<n+k$. We have
\begin{equation}
\begin{split}
R_n^k
 &= R_{n-1}^k + \sum_{i=1}^{k-2} C_{i-1} (R_{n-i}^{k-i-1}+R_{n-i-1}^{k-i})
              + C_{k-2} R_{n-k+1}^1 \\
 &= R_{n-1}^{k-1} + \sum_{i=1}^{k-2} C_{i-1} R_{n-i}^{k-i-1}
  + R_{n-2}^k     + \sum_{i=1}^{k-1} C_{i-1} R_{n-i-1}^{k-i})  \\
 &= R_{n-1}^k + R_{n}^{k-1}.
\end{split}
\end{equation}

%%%%%%%%%%%%%%%%%%%%%%%%%%%%%%%%%%%%%%%%%%%%%%%%%%%%%%%%%%%%%%%%%
\subsubsection{The bijective proof on permutations}

Let us denote $S(n,k)$ the set of permutations avoiding $132$ and avoiding the mesh pattern $(0,k,0,0)$. Then,

\begin{proposition}
\label{prop-snk}
The set $S(n,k)$ is composed of permutations of size $n$ avoiding
$132$ whose position $i$ of $1$ satisfy $i\leq k$. In particular,
$S(n,k) / S(n,k-1)$ consists in elements where $1$ is at the $k^{th}$
place.
It is equivalent to say that the descents composition of these
permutations have their last part equal to $n+1-k$.
\end{proposition}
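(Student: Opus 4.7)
My plan is to deduce the three equivalent characterizations in sequence, relying heavily on the structural analysis already carried out in the proof of Theorem~\ref{thm-0k00}.

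\textbf{First characterization via the position of $1$.} The key observation extracted from the proof of Theorem~\ref{thm-0k00} is that in a $132$-avoiding permutation $\sigma$, a value $\sigma_i$ has at least $k$ greater values on its left if and only if $\sigma_i$ is smaller than each of $\sigma_1,\dots,\sigma_k$; indeed, the $132$-avoidance forces the entries greater than $\sigma_i$ sitting to the left of $i$ to occupy the leftmost positions. Consequently, $mmp(0,k,0,0)(\sigma)=0$ if and only if no position $i>k$ carries a value smaller than every one of $\sigma_1,\dots,\sigma_k$. Since the value $1$ is smaller than every other entry, this condition fails as soon as $1$ sits at a position $>k$, and it is automatically satisfied whenever $1$ occupies one of the first $k$ positions. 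Thus $S(n,k)$ is exactly the set of $\sigma\in\SG_n(132)$ for which the position of $1$ is at most $k$.

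\textbf{The difference $S(n,k)\setminus S(n,k-1)$.} This part is immediate from the first characterization: being in $S(n,k)$ but not in $S(n,k-1)$ means the position of $1$ is at most $k$ but not at most $k-1$, i.e.\ it equals exactly $k$.

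\textbf{Reformulation via the last descent.} Finally I would show that for a $132$-avoiding permutation, the position of the value $1$ coincides with the starting position of the last ascending run, so that ``$1$ is at position $k$'' is equivalent to the last part of the descent composition being $n+1-k$. One direction is straightforward: if $\sigma_k=1$ and some descent occurred strictly after position $k$, then $1=\sigma_k<\sigma_{j+1}<\sigma_j$ at positions $k<j<j+1$ would form a $132$ pattern; hence $\sigma_k<\sigma_{k+1}<\cdots<\sigma_n$, and $\sigma_{k-1}>1=\sigma_k$ forces the run to begin at~$k$. Conversely, if the final run starts at position~$k$, the entries $\sigma_{k+1},\dots,\sigma_n$ are greater than $\sigma_k$, so the value $1$ must lie at some position $j\leq k$; if $j<k$ then positions $j,k-1,k$ would carry values $1<\sigma_k<\sigma_{k-1}$, a $132$ pattern (the case $j=k-1$ is incompatible with $\sigma_{k-1}>\sigma_k$), so $\sigma_k=1$.

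The only slightly non-routine step is the equivalence between the descent-composition formulation and the position of $1$, which I expect to be the main (though mild) obstacle; everything else follows directly from the structural lemma that underlies Theorem~\ref{thm-0k00}.
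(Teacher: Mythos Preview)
Your proof is correct and follows essentially the same line as the paper's: both pivot on the position of the value~$1$ and use $132$-avoidance to force the tail $\sigma_k,\sigma_{k+1},\dots,\sigma_n$ to be increasing. The paper argues a touch more directly---observing that the entries after~$1$ are increasing and hence the quadrant-II count is maximized at the position of~$1$---whereas you route through the ``smaller than $\sigma_1,\dots,\sigma_k$'' characterization from Theorem~\ref{thm-0k00}; on the other hand you spell out the descent-composition equivalence in both directions, which the paper leaves as a one-line remark.
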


\begin{proof}
Let $p$ be a permutation avoiding $132$ and let $i$ the position of its
$1$. If $i>k$, then $p$ does not avoid $(0,k,0,0)$. Otherwise, as $p$
avoids $132$, all values after $1$ must be in increasing order. That
implies that the number of points in quadrant $II$ is maximal for the
position $i$ and that it is bounded by $i-1 < k$.
From this remark concerning the order of the elements after $1$, we deduce the
interpretation in terms of descents composition of $p$.
\end{proof}

The set $S(n,k)$ splits into two blocks: the permutations for which
$1$ is at position $k$ and the other. The second ones are directly
element of $S(n,k-1)$. Permutation avoiding $132$ in which $1$ is in
position $k$ are in bijection with element of $S(n-1,k)$ using the
following process: remove the $1$ and pack the obtained
result.

For example, one can check that $S(7,3)$ is composed by the $27$
following permutations:
\begin{equation}
\begin{split}
1234567, 2134567, 2314567, 3124567, 3214567, 3412567, 4123567, 4213567,
 4312567, \\
 4512367, 5123467, 5213467, 5312467, 5412367, 5612347, 6123457, 6213457, 
 6312457, \\
 6412357, 6512347, 6712345, 7123456, 7213456, 7312456,
 7412356, 7512346, 7612345
\end{split}
\end{equation}

These $27$ permutations are obtained as the reunion of the $7$ elements
of $S(7,2)$
\begin{equation}
1234567, 2134567, 3124567, 4123567, 5123467, 6123457, 7123456
\end{equation}
and the $20$ elements of $S(6,3)$
\begin{equation}
\begin{split}
123456, 213456, 231456, 312456, 321456, 341256, 412356, 421356, 431256,
 451236, \\
 512346, 521346, 531246, 541236, 561234, 612345, 621345, 631245,
 641235, 651234
\end{split}
\end{equation}
in which we have incremented the values by one then added the $1$ in
third position.

%\begin{equation}
%\end{equation}

%%%%%%%%%%%%%%%%%%%%%%%%%%%%%%%%%%%%%%%%%%%%%%%%%%%%%%%%%%%%%%%%%%%%
\subsubsection{Proof using other combinatorics objects}

The Catalan triangle counts for example non decreasing parking
functions of size $n$ and maximum $k$. Let us denote $ND(n,k)$ the set
of non decreasing parking functions of size $n$ and maximum at most
$k$.
Then $Q_{n,132}^{(0,k,0,0)}(t,0)$ counts the cardinality of $ND(n,k)$.

We thus immediate check the induction formula of
Theorem~\ref{thm-0k00}:

either $p\in ND(n,k)$ ends with a value smaller than $k$ and is an
element of $ND(n,k-1)$, either $p$ ends by $k$ and in this case $p$
without its last part is an element of $ND(n-1,k)$. We point out that
if $n<k$, $p$ can end by $k$, that justifies how the second case is
empty and thus the two cases of the theorem.

Moreover, they also satisfy Formula (24) of~\cite{KRT} at $x=0$:
\begin{equation}
Q_{n,132}^{(0,k,0,0)}(t,0) =
\sum_{i=1}^{k-1} C_{i-1} Q_{n-i,132}^{(0,k-i,0,0)}(t,0) +
Q_{n-1,132}^{(0,k,0,0)}(t,0).
\end{equation}
This decomposition comes from the decomposition of non-decreasing
parking functions along their breakpoints: recall that the
breakpoints of $p$ are equal to the values $>1$ such that $p_i=i$. Si
$p$ has a breakpoint at position $i$, we deduce that $p=u \cdot v[|u|]$ with $u$ of
length $i-1$ and $v$ two non decreasing parking functions.

Formula (24) of~\cite{KRT} should be interpreted as follows: we
decompose each element of $ND(n,k)$ along their first breakpoint (if
they have one). Elements without breakpoint are prime parking
functions and are in bijection with elements of $ND(n-1,k)$ (we just
remove the first letter).  Otherwise, $p=u.v[|u|]$ where $v$ is
prime. Then if $|u|=i$, we write $u=1.u'$ where $u'$ is a parking
function of length $i-1$ and $v$ is a parking function of length $n-i$
of maximum at most $k-i$. It is obvious that the shifted concatenation
of any prime parking function of length $i$ with an element of
$ND(n-i,k-i)$ is also an element of $ND(n,k)$ and such obtained sets
when $i$ takes different values are all disjoint. That proves the formula.

\subsubsection{Connections with descents}

%Les permutations de taille $n$ (\'evitant 132) qui \'evitent le motif
%$(0,k,0,0)$ sans \'eviter $(0,k-1,0,0)$ sont exactement les permutations dont
%la composition des descentes finit par $n-k-1$.
%C'est-\`a-dire exactement les permutations o\`u $1$ est \`a la $k^e$ place
%(sauf $k=2$ o\`u $1$ peut aussi \^etre \`a la premi\`ere place).

Proposition~\ref{prop-snk} shows that elements of $S(n,k)$ are the
permutations avoiding $132$ whose descent compositions is less fine than
$I(k)=(1^{k-1},n-k+1)$. Let us consider the left weak order $<$, the
permutohedron on values. Let us denote by $t(I)$ (resp. $s(I)$) the greatest (resp. smallest)
permutation avoiding $132$ for $<$ of descent composition $I$.

Let us recall that $<$ restricted on $132$ avoiding
permutations is the order called Tamari order and that we can also
obtain it as quotient of the weak order by grouping permutations along
Sylvester classes on theirs inverses~\cite{PBT}.

As descent classes are union of Sylvester classes on their inverses and
as they inherit a lattice structure for the refinement order, we
deduce
\begin{proposition}
For all $n$ and all $k$, elements of $S(n,k)$ are the interval
$[id,t(1^{k-1},n-k+1)]$ for the Tamari order.

Moreover, the set $S(n,k) / S(n,k-1)$ is the interval
$[s(k-1,n-k+1),t(1^{k-1},n-k+1)]$. In particular, $S(n,k)$ splits into a union
of intervals for the Tamari order.
\end{proposition}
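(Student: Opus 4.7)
\emph{Proof plan.} The plan is to realize both sets as intersections of $\SG_n(132)$ with lower intervals of the weak order on $\SG_n$, and then apply the identification of the Tamari order with the weak order restricted to $132$-avoiding permutations. By Proposition~\ref{prop-snk}, $S(n,k)$ consists of $\sigma \in \SG_n(132)$ with $\Des(\sigma) \subseteq D := \{1,\ldots,k-1\}$, so the argument reduces to inversion-set bookkeeping on two explicit extremal permutations.

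For the first identity, take $w_0(D) = n\,(n-1)\cdots(n-k+2)\,1\,2\cdots(n-k+1)$. Its inversion set is $\{(i,j) : i \leq k-1,\ i<j\}$, and consequently the weak-order interval $[id, w_0(D)]$ equals $\{\sigma : \sigma_k < \sigma_{k+1} < \cdots < \sigma_n\} = \{\sigma : \Des(\sigma) \subseteq D\}$. Since $w_0(D)$ is itself $132$-avoiding (its decreasing prefix dominates its increasing tail, excluding any $132$-triple), intersecting with $\SG_n(132)$ identifies $S(n,k)$ with $[id, t(1^{k-1},n-k+1)]_{\text{Tamari}}$. For the second identity, take $s(J) = 2\,3\cdots k\,1\,(k+1)\cdots n$, a $132$-avoiding permutation of descent composition $J = (k-1, n-k+1)$, with inversion set $\{(i,k) : i<k\}$. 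The weak-order interval $[s(J), w_0(D)]$ is characterized by $\sigma_i > \sigma_k$ for all $i<k$ together with $\sigma_k < \sigma_{k+1} < \cdots < \sigma_n$, which together force $\sigma_k = 1$. Conversely, a $132$-avoiding permutation with $1$ at position $k$ automatically has an increasing tail (a descent among tail entries, combined with the leading $1$, would form a $132$-pattern), hence lies in the interval. Intersecting with $\SG_n(132)$ therefore yields $S(n,k) \setminus S(n,k-1) = [s(k-1,n-k+1), t(1^{k-1},n-k+1)]_{\text{Tamari}}$, and the final clause follows from the disjoint telescoping $S(n,k) = \bigsqcup_{j=1}^{k}(S(n,j)\setminus S(n,j-1))$.

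The \emph{main technical step} is aligning the weak-order interval description with the position-of-$1$ constraint, which rests on (i) direct inversion-set computations for $s(J)$ and $w_0(D)$, and (ii) the local observation --- already implicit in preceding sections --- that a $132$-avoiding permutation with $\sigma_k = 1$ has an increasing tail. The lift from weak to Tamari is then automatic because both extreme permutations are themselves $132$-avoiding. The abstract framework hinted at just before the proposition (descent classes as unions of sylvester classes on inverses, carrying a refinement-lattice structure in the Tamari quotient from~\cite{PBT}) gives the conceptual reason why these interval endpoints line up with descent compositions, but is not strictly required once the inversion-set computations are in hand.
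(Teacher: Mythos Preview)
Your proof is correct. The paper's argument is terser and leans directly on the sylvester/\cite{PBT} framework invoked in the paragraph preceding the proposition: descent classes are unions of sylvester classes on inverses, so an interval of descent compositions in the refinement order lifts automatically to a Tamari interval, and Proposition~\ref{prop-snk} supplies the relevant interval of compositions (those with last part $n{+}1{-}k$, which lie between $(k{-}1,n{-}k{+}1)$ and $(1^{k-1},n{-}k{+}1)$). You take a more hands-on route: you exhibit the two extremal permutations $w_0(D)$ and $s(J)$ explicitly, compute their position-inversion sets, and verify directly that the resulting weak-order intervals cut out exactly the permutations with the prescribed descent set (resp.\ with $1$ in position $k$), before restricting to $\SG_n(132)$ and invoking the Tamari--weak-order identification. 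Your approach is more self-contained --- no appeal to the descent-class/sylvester-class compatibility is needed --- at the price of the explicit inversion bookkeeping; the paper's approach is shorter but presupposes that machinery. Both land on the same endpoints, and your final paragraph correctly identifies the abstract framework as the conceptual explanation underlying your computation.
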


\begin{proof}
The first part comes from the previous discussion. The second part is
a consequence of the second part of Proposition~\ref{prop-snk} and
the fact that compositions ending by $k$ are all in the interval
$[(k-1,n-k+1),(1^{k-1},n-k+1)]$ in the order of descent compositions.
\end{proof}

For example, $S(6,4)$ is equal to the interval $[123456,654123]$ which
can be decomposed into four intervals
\begin{equation}
[123456,123456],\ 
[213456,612345],\
[231456,651234],\
[234156,654123].
\end{equation}

We get a generalization of this result for the other coefficients
of polynomials $Q_{n,{132}}$:

\begin{proposition}
Let $n$ be a positive integer. Let us consider the polynomial $Q_{n,132}^{(0,k,0,0)}(t,x)$.  
The permutations whose monomial in $x$
is smaller than a certain value $\ell$ is a superior ideal of the
Tamari order. We denote by $I(n,k,\ell)$ this ideal.
\end{proposition}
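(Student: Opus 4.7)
The plan is to reduce the proposition to a monotonicity statement: $mmp(0,k,0,0)$ is non-decreasing along the Tamari order on $\SG_n(132)$. Granted this, the sub-level set $\{\sigma : mmp(0,k,0,0)(\sigma) < \ell\}$ is automatically closed under going down in Tamari, which is exactly the content of being an order ideal. A convenient first step is to extract from the proof of Theorem~\ref{thm-0k00} the explicit formula
\[
mmp(0,k,0,0)(\sigma) \;=\; \min(\sigma_1,\ldots,\sigma_k) - 1 \qquad (\sigma\in\SG_n(132)).
\]
Indeed, the inductive argument there shows that, in a $132$-avoider, $\sigma_i$ has at least $k$ greater values on its left iff $\sigma_i < \sigma_j$ for every $j \leq k$, i.e., iff $\sigma_i$ is strictly less than $m := \min(\sigma_1,\ldots,\sigma_k)$; the $m-1$ values $1,2,\ldots,m-1$ all occur past position $k$ (since $m$ itself occupies a position in $\{1,\ldots,k\}$) and each contributes.

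The second step is to prove that $\sigma \mapsto \min(\sigma_1,\ldots,\sigma_k)$ is monotone non-decreasing in the left weak order on $\SG_n$; since the paper recalls that the Tamari order is the restriction of the left weak order to $132$-avoiders, this is enough. It suffices to check monotonicity on a cover $\sigma \lessdot \tau$, which swaps two values $i$ and $i+1$ in $\sigma$, pulling $i+1$ to the left of $i$. Write $p = \sigma^{-1}(i)$ and $q = \sigma^{-1}(i+1)$, so $p < q$. If $\{p,q\} \subseteq \{1,\ldots,k\}$ or $\{p,q\} \subseteq \{k+1,\ldots,n\}$, then the multiset $\{\tau_1,\ldots,\tau_k\}$ equals $\{\sigma_1,\ldots,\sigma_k\}$ and the minimum is preserved. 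In the remaining case $p \leq k < q$, the first $k$ values undergo exactly the substitution $i \mapsto i+1$; a direct check shows that $\min$ cannot decrease (it strictly increases when $i$ itself was the previous minimum).

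Combining the two steps, $mmp(0,k,0,0)$ is monotone non-decreasing on Tamari, whence $I(n,k,\ell)$ is an order ideal as claimed. This specializes at $\ell=1$ to the principal ideal $S(n,k) = [\mathrm{id}, t(1^{k-1}, n-k+1)]$ discussed above. The main obstacle is essentially semantic: the phrase ``superior ideal'' is not standard terminology, and the preceding examples suggest that the intended statement may be the stronger one that $I(n,k,\ell)$ is a \emph{principal} lower ideal. Upgrading to that reading would require exhibiting an explicit Tamari maximum for $I(n,k,\ell)$, which one can construct using the insertion algorithm underlying the bijection $\phi$ above, applied to the non-decreasing parking function with its first $k$ entries all equal to $\ell$ and its remaining entries saturated; this refinement is independent of the monotonicity argument itself.
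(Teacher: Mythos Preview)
Your argument is correct, but it proceeds differently from the paper's (very terse) proof. The paper argues \emph{per position}: matching $MMP(0,k,0,0)$ at position $i$ means there are at least $k$ position-inversions of the form $(j,i)$ with $j<i$; since the left weak order is exactly containment of position-inversion sets, any $\tau$ in $[\mathrm{id},\sigma]$ has fewer such pairs, so if $\sigma$ fails to match at position $i$ then so does every $\tau\leq\sigma$. This observation holds for \emph{all} permutations, not only $132$-avoiders, and the ideal statement follows immediately. Your route instead exploits the $132$-specific identity $mmp(0,k,0,0)(\sigma)=\min(\sigma_1,\dots,\sigma_k)-1$ extracted from Theorem~\ref{thm-0k00}, and then checks monotonicity of this minimum under left-weak covers. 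Both proofs are short; the paper's is more general, while yours ties the result explicitly to the structural description of the statistic already established in Section~\ref{sec:0k00}. Your final paragraph speculating about a \emph{principal} ideal goes beyond what the proposition claims and beyond what the paper proves.
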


\begin{proof}
This proposition is immediate since if a permutation $\sigma$ does not
admit the mesh pattern $(0,k,0,0)$ in position $i$, no permutation of
the interval $[id,\sigma]$ could have this pattern.
\end{proof}

\section{The quadrant III: the series $Q_{132}^{(0,0,\ell,0)}$}
\label{sec:0010}

These series refine the $132$-avoiding permutations along the number
of values having $k$ smaller values on their left. The following table
displays the first values of $Q_{132}^{(0,0,\ell,0)}|_{t^nx^k}$:

\begin{equation}
\begin{array}{|c|ccccc|cccc|ccc|cc|c|}
\hline
              & \multicolumn{5}{c|}{\ell = {\bf 1}} & \multicolumn{4}{c|}{\ell = {\bf 2}} & \multicolumn{3}{c|}{\ell = {\bf 3}} & \multicolumn{2}{c|}{\ell = {\bf 4}} & \multicolumn{1}{c|}{\ell = {\bf 5}}\\ \hline
n \backslash k & {\bf 0} & {\bf 1} & {\bf 2} & {\bf 3} & {\bf 4} & {\bf 0} & {\bf 1} & {\bf 2} & {\bf 3} & {\bf 0} & {\bf 1} & {\bf 2} & {\bf 0} & {\bf 1} & {\bf 0} \\ \hline
{\bf 1}        & 1 &   &   &   &   & 1 &   &   &   & 1 &   &   & 1 &   & 1 \\ \hline 
{\bf 2}        & 1 & 1 &   &   &   & 2 &   &   &   & 2 &   &   & 2 &   & 2 \\ \hline 
{\bf 3}        & 1 & 3 & 1 &   &   & 3 & 2 &   &   & 5 &   &   & 5 &   & 5 \\ \hline 
{\bf 4}        & 1 & 6 & 6 & 1 &   & 5 & 7 & 2 &   & 9 & 5 &   &14 &   &14 \\ \hline 
{\bf 5}        & 1 &10 &20 &10 & 1 & 8 &21 &11 & 2 &18 &19 & 5 &28 &14 &42 \\ \hline 
\end{array}
\end{equation}

In their paper~\cite{KRT}, the three authors compute several terms of the
function $Q_{132}^{(0,0,1,0)}$. It begins as follows:
\begin{equation}
\begin{array}{l}
Q_{132}^{(0,0,1,0)}(t, x) = 1 + t + (1+x)t^2 + (1+3x+x^2)t^3 + (1+6x+6x^2+x^3)t^4 \\
+ (1+10x+20x^2+10x^3+x^4)t^5 + (1+15x+50x^2+50x^3+15x^4+x^5)t^6 \\
+ (1+21x+105x^2+175x^3+105x^4+21x^5+x^6)t^7 \\
+ (1+28x+196x^2+490x^3+490x^4+196x^5+28x^6+x^7)t^8 + ...
\end{array}
\end{equation}

and we recall that the Narayana triangle (A001263 of~\cite{Sloane})
\begin{equation}
\label{nara-Catalan}
\begin{array}{c|ccccccccccccc}
n\backslash k& 1 & 2 & 3 & 4 & 5 & 6 & 7 & 8 \\
\hline
 1 & 1 &   &   &   &   &   &   &   \\
 2 & 1 & 1 &   &   &   &   &   &   \\
 3 & 1 & 3 & 1 &   &   &   &   &   \\
 4 & 1 & 6 & 6 & 1 &   &   &   &   \\
 5 & 1 &10 &20 &10 & 1 &   &   &   \\
 6 & 1 &15 &50 &50 &15 & 1 &   &   \\
 7 & 1 &21 &105&175&105&21 & 1 &   \\
 8 & 1 &28 &196&490&490&196&28 & 1 \\
%\hline
\end{array}
\end{equation}
We will denote $N(n, k)$ the Narayana number indexed by $n$ and $k$.

\begin{theorem}
\label{thm-0010}
We have $\forall~n \geqslant 1, \forall~0 \leqslant k \leqslant n-1$,
\begin{equation}
Q_{132}^{(0,0,1,0)}(t, x)|_{t^nx^k} = N(n,k+1) = \frac{1}{n}\binom{n}{k}\binom{n}{k+1}
\end{equation}
\end{theorem}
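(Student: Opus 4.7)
The plan is to reinterpret the statistic as a classical permutation statistic, transport it through Krattenthaler's bijection of Theorem~\ref{thm-k000}, and then invoke the well-known peak enumeration of Dyck paths. First, $\sigma_i$ matches $MMP(0,0,1,0)$ in $\sigma$ if and only if some $\sigma_j$ with $j<i$ is smaller than $\sigma_i$, i.e., $\sigma_i$ is \emph{not} a left-to-right minimum. Writing $\mathrm{lrmin}(\sigma)$ for the number of left-to-right minima of $\sigma$, this gives $mmp(0,0,1,0)(\sigma) = n - \mathrm{lrmin}(\sigma)$.

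Next, under Krattenthaler's bijection applied to $\sigma\in\SG_n(132)$, the processing of the letter $\sigma_i$ appends $u_i\geq 0$ up-steps followed by exactly one down-step, where $u_i = f(i)-f(i-1)+1$ with $f(i):=|\{j>i:\sigma_j>\sigma_i\}|$ and $f(0):=0$. Thus $\sigma_i$ contributes a peak (a factor $UD$) to the Dyck path precisely when $u_i\geq 1$, equivalently when $f(i)\geq f(i-1)$. The heart of the argument is then the claim that $\sigma_i$ contributes a peak if and only if $\sigma_i$ is a left-to-right minimum of $\sigma$. The ``$\Leftarrow$'' direction is immediate: if $\sigma_i<\sigma_{i-1}$, then every $j>i$ with $\sigma_j>\sigma_{i-1}$ automatically satisfies $\sigma_j>\sigma_i$, so $f(i-1)\leq f(i)$. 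The ``$\Rightarrow$'' direction, where $132$-avoidance is essential, is the step I expect to be the main obstacle. If $\sigma_i$ is not a left-to-right minimum, one first shows $\sigma_{i-1}<\sigma_i$: otherwise some $k<i-1$ would satisfy $\sigma_k<\sigma_i<\sigma_{i-1}$, giving a $132$ pattern at positions $k<i-1<i$. Then, again by $132$-avoidance, $\{j>i:\sigma_j>\sigma_{i-1}\}=\{j>i:\sigma_j>\sigma_i\}$, since any $j$ with $\sigma_{i-1}<\sigma_j<\sigma_i$ would yield a $132$ pattern at positions $i-1<i<j$. Since $\sigma_i$ itself contributes the index $j=i$ to the count defining $f(i-1)$, we conclude $f(i-1)=f(i)+1>f(i)$ and no peak is created.

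Once this correspondence is in hand, $\mathrm{lrmin}(\sigma)$ equals the number of peaks in the Dyck path associated with $\sigma$, and the classical Narayana distribution of peaks on Dyck paths of semilength $n$ yields $\#\{\sigma\in\SG_n(132):\mathrm{lrmin}(\sigma)=j\}=N(n,j)$. Substituting $j=n-k$ and invoking the symmetry $N(n,j)=N(n,n-j+1)$ of the Narayana triangle gives $Q_{132}^{(0,0,1,0)}|_{t^nx^k}=N(n,n-k)=N(n,k+1)$, as claimed.
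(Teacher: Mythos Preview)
Your argument is correct. The identification $mmp(0,0,1,0)(\sigma)=n-\mathrm{lrmin}(\sigma)$ is immediate, and your verification that Krattenthaler's bijection sends left-to-right minima of $\sigma$ precisely to peaks of the associated Dyck path is carefully done; the $132$-avoidance enters exactly where you isolate it, forcing $\sigma_{i-1}<\sigma_i$ whenever $\sigma_i$ fails to be a left-to-right minimum and then forbidding intermediate values to the right. The final appeal to the Narayana distribution of peaks and to the symmetry $N(n,j)=N(n,n+1-j)$ is standard.

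This is, however, a genuinely different route from the paper. The paper works not with Dyck paths but with the decreasing binary tree attached to a $132$-avoiding permutation: it observes (Proposition~\ref{bst_mmp}) that a value matches $MMP(0,0,1,0)$ exactly when the corresponding node has a left child, and then invokes the Narayana enumeration of binary trees by number of left branches (Proposition~\ref{nara_bst}). Your approach has the advantage of reusing the very bijection already introduced for Theorem~\ref{thm-k000}, so it needs no new combinatorial object; the price is the extra symmetry step $N(n,n-k)=N(n,k+1)$, which the tree approach avoids because ``number of left branches'' lands directly on $N(n,k+1)$. The paper's approach, on the other hand, generalises more transparently to Theorem~\ref{thm-00k0} (the pattern $MMP(0,0,\ell,0)$), where left subtrees of size $\geq\ell$ replace left branches; extending your peak argument to arbitrary $\ell$ would require tracking more delicate height information along the Dyck path.
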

This theorem precise Theorem 11 of~\cite{KRT}.

A way to see that is to consider the binary decreasing tree associated
to a $132$-avoiding permutation. Let us see that on an example, the
same permutation as in Figure~\ref{mmp_draw}: $\sigma =
768945213$. We build a binary tree by inserting the greatest value
(here $9$), then split the permutation into two parts, $768$ on the
left and $45213$ on the right. Values in the first part have to be
inserted on the left of this $9$ and the other part on the right. We
still begin by the greatest remaining values of each
sub-part. Therefore, the resulting tree is decreasing as we go deeper
in it (see Figure~\ref{bst_constr}).

\begin{figure}
  \centering
  \begin{tikzpicture}[scale=0.6]
    \draw[above] (3,5) node {9};
    \draw (3,5) node {$\bullet$};

    \draw (3,5) -- (2,4);
    \draw[above] (2,4) node {8};
    \draw (2,4) node {$\bullet$};

    \draw (2,4) -- (0,3);
    \draw[above] (0,3) node {7};
    \draw (0,3) node {$\bullet$};    

    \draw (0,3) -- (1,2);
    \draw[above] (1,2) node {6};
    \draw (1,2) node {$\bullet$};

    \draw (3,5) -- (5,4);
    \draw[above] (5,4) node {5};
    \draw (5,4) node {$\bullet$};

    \draw (5,4) -- (4,3);
    \draw[above] (4,3) node {4};
    \draw (4,3) node {$\bullet$};

    \draw (5,4) -- (8,3);
    \draw[above] (8,3) node {3};
    \draw (8,3) node {$\bullet$};

    \draw (8,3) -- (6,2);
    \draw[above] (6,2) node {2};
    \draw (6,2) node {$\bullet$};

    \draw (6,2) -- (7,1);
    \draw[above] (7,1) node {1};
    \draw (7,1) node {$\bullet$};

    \draw (-1,0.5) -- (9,0.5);
    \foreach \y in {0,1,...,8} {
      \draw (\y,0.6) -- (\y,0.4);
    }
    \draw[below] (0,0.5) node {7};
    \draw[below] (1,0.5) node {6};
    \draw[below] (2,0.5) node {8};
    \draw[below] (3,0.5) node {9};
    \draw[below] (4,0.5) node {4};
    \draw[below] (5,0.5) node {5};
    \draw[below] (6,0.5) node {2};
    \draw[below] (7,0.5) node {1};
    \draw[below] (8,0.5) node {3};

  \end{tikzpicture}
  \caption{Construction of the binary decreasing tree associated with $\sigma = 768945213$.}~\label{bst_constr}
\end{figure}
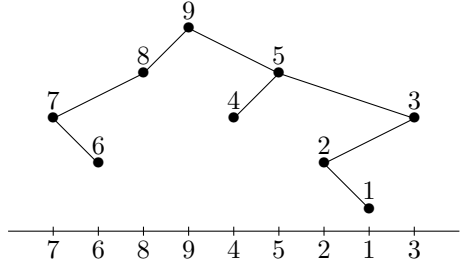

This construction is well-known to describe a bijection as we can
recover the permutation by gravity (or an infix deep first search
reading of the tree). Moreover, if we forget the labels and only keep
the binary tree shape, the previous map is still a bijection between
$132$-avoiding permutations and planar binary trees. The avoidance of
pattern $132$ makes that the tree can be uniquely labeled by $1, 2,
\dots , n$ decreasingly. The labels must be set from $n$, then $n-1,
\dots , 1$ following a prefix deep first search of the tree. As prefix
deep first search consists in parsing the root, the left child then
the right child all recursively, the resulting tree is therefore
decreasing and does not contain any $132$ pattern.

In the permutation $\sigma = 768945213$, the four values $8$, $9$, $5$
and $3$ see at least a point in quadrant III. These values correspond
exactly to the nodes of the corresponding tree having a left child.

\begin{proposition}
\label{bst_mmp}
Let $\sigma$ be a $132$-avoiding permutation of $\SG$, the number
$mmp(0,0,1,0)$ of positions matching with $MMP(0,0,1,0)$ is equal to
the number of left branches in the binary decreasing tree associated to
$\sigma$.

\begin{proof}
This fact is obvious due to the construction. A value to the left and
smaller than a given value must be inserted on the left and deeper as
the tree is decreasing, and on the other side, a node having a left
child sees at least one smaller value on its left. As the tree is
decreasing and values inserted on the left are left in the
permutation, any node having a left child matches the pattern
$MMP(0,0,1,0)$.
\end{proof}
\end{proposition}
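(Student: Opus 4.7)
The plan is to bypass the binary-tree interpretation entirely and instead reduce the statement to a peak-counting result on Dyck paths via Krattenthaler's bijection used already in Section~\ref{sec:k000}. The starting observation is that a position $i$ contributes to $mmp(0,0,1,0)(\sigma)$ exactly when $\sigma_i$ has at least one smaller value to its left, i.e.\ exactly when $\sigma_i$ is \emph{not} a left-to-right minimum of $\sigma$. Writing $\mathrm{lrmin}(\sigma)$ for the number of left-to-right minima, this gives
\[
mmp(0,0,1,0)(\sigma) = n - \mathrm{lrmin}(\sigma).
\]
Since the Narayana numbers satisfy $N(n,k+1) = N(n,n-k)$, the theorem is then equivalent to
\[
|\{\sigma \in \SG_n(132) : \mathrm{lrmin}(\sigma) = j\}| = N(n,j).
\]

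Next I would establish a lemma specific to $132$-avoidance: for any $\sigma \in \SG_n(132)$ and any $i \geqslant 2$, $\sigma_i$ is a left-to-right minimum if and only if $\sigma_{i-1} > \sigma_i$. The nontrivial direction uses that a descent $\sigma_{i-1} > \sigma_i$ together with any earlier $\sigma_j$ satisfying $\sigma_j < \sigma_i$ would yield a $132$ pattern $(\sigma_j, \sigma_{i-1}, \sigma_i)$, which is forbidden. Combined with the fact that position $1$ is always a left-to-right minimum, this yields $\mathrm{lrmin}(\sigma) = \mathrm{des}(\sigma) + 1$ on $\SG_n(132)$.

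The heart of the proof is to show that Krattenthaler's bijection transports the statistic $\mathrm{des}(\sigma) + 1$ to the number of peaks (an up step immediately followed by a down step) of the associated Dyck path. Writing $u_i$ for the number of up steps inserted when reading $\sigma_i$ and $h_i$ for the height reached after processing $\sigma_i$ (with $h_0 = 0$), one has $u_i = h_i - h_{i-1} + 1$, and the contribution of $\sigma_i$ produces a peak precisely when $u_i \geqslant 1$. A short case analysis -- in the ascent case $\sigma_{i-1} < \sigma_i$ using $132$-avoidance to rule out any $j > i$ with $\sigma_{i-1} < \sigma_j < \sigma_i$, and in the descent case a direct computation -- shows that $u_i \geqslant 1$ if and only if $i = 1$ or $\sigma_{i-1} > \sigma_i$. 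Summing over $i$, the number of peaks of the Dyck path equals $1 + \mathrm{des}(\sigma)$, which by the lemma equals $\mathrm{lrmin}(\sigma)$.

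The proof concludes by invoking the classical fact that the number of Dyck paths of length $2n$ with exactly $j$ peaks is $N(n,j)$. Chaining the three steps,
\[
Q_{132}^{(0,0,1,0)}\big|_{t^n x^k} = |\{\sigma \in \SG_n(132) : \mathrm{lrmin}(\sigma) = n-k\}| = N(n, n-k) = N(n,k+1).
\]
The main technical obstacle is the case analysis yielding the equivalence $u_i \geqslant 1 \Leftrightarrow \sigma_{i-1} > \sigma_i$ (for $i \geqslant 2$) under $132$-avoidance; once this is settled the remaining steps are either definitional or classical.
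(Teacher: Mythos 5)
Your individual steps are all correct --- I verified the reduction $mmp(0,0,1,0)(\sigma)=n-\mathrm{lrmin}(\sigma)$, the lemma $\mathrm{lrmin}(\sigma)=\mathrm{des}(\sigma)+1$ on $\SG_n(132)$, the peak analysis under Krattenthaler's bijection (the ascent case correctly uses $132$-avoidance to get $u_i=0$, and in the descent case $h_i\geqslant h_{i-1}$ forces $u_i\geqslant 1$), and the classical Narayana count of peaks. But the chain you assemble proves the wrong statement. Proposition~\ref{bst_mmp} is a \emph{pointwise} identity, valid for each individual permutation: $mmp(0,0,1,0)(\sigma)$ equals the number of left branches of the binary decreasing tree of that same $\sigma$. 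Your argument never mentions the decreasing tree at all; what it establishes is the equidistribution result
\begin{equation*}
Q_{132}^{(0,0,1,0)}\big|_{t^nx^k} = N(n,k+1),
\end{equation*}
which is Theorem~\ref{thm-0010}, the statement the proposition is used to derive. Equidistribution of two statistics does not imply they agree on each permutation, so as a proof of Proposition~\ref{bst_mmp} this is a genuine gap: the paper's proof consists precisely of the tree-level observation you chose to bypass (a point in quadrant~III of $\sigma_i$ forces an insertion into the left subtree of the node $\sigma_i$, and conversely a node with a left child sees a smaller value to its left).

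The gap is easy to close with material you already have. The missing ingredient is the standard fact about decreasing binary trees: for any permutation, the node $\sigma_i$ has a left child if and only if $i\geqslant 2$ and $\sigma_{i-1}<\sigma_i$ (the left subtree of $\sigma_i$ is built from the nonempty block of entries immediately preceding position $i$), so the number of left branches equals $n-1-\mathrm{des}(\sigma)$. Combined with your first two steps, $mmp(0,0,1,0)(\sigma)=n-\mathrm{lrmin}(\sigma)=n-1-\mathrm{des}(\sigma)$, this yields the proposition directly, with no Dyck paths needed --- in fact more cheaply than your route. Your Dyck-path computation remains valuable as a genuinely different proof of Theorem~\ref{thm-0010}: where the paper goes through trees and Proposition~\ref{nara_bst} (trees with $k$ left branches are Narayana-counted), you transport the statistic to peaks via the same bijection the paper already uses in Section~\ref{sec:k000}, which has the virtue of unifying quadrants~I and~III under one bijection; but it should be presented as an alternative proof of the theorem, not of this proposition.
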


\begin{proposition}
\label{nara_bst}
The number of binary trees over $n$ nodes containing $k$ left (or right)
branches is counted by the Narayana number $N(n,k+1)$.

\end{proposition}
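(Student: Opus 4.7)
The plan is to invoke the classical recursive bijection $\Phi$ between binary trees on $n$ nodes and Dyck paths of length $2n$, and to translate the left-branch statistic into the well-studied peak statistic on Dyck paths. Recall that $\Phi$ sends the empty tree to the empty path, and a tree whose root has left subtree $T_L$ and right subtree $T_R$ to $U\cdot\Phi(T_L)\cdot D\cdot\Phi(T_R)$; each node thus contributes exactly one $U$ and one matching $D$ step, giving a path of length $2n$.

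The crux is the following observation, immediate from the recursive definition: a node has no left child if and only if its $U$-step is immediately followed by a $D$-step, i.e.\ its $U$ is the ascending step of a peak. Indeed, if $T_L$ is nonempty, the step just after the node's $U$ is the leading $U$-step of $\Phi(T_L)$; if $T_L$ is empty, $\Phi(T_L)$ is empty, so the step just after $U$ is the matching $D$. Consequently a binary tree with $k$ left branches, hence $n-k$ nodes with no left child, maps to a Dyck path with exactly $n-k$ peaks. It is classical that Dyck paths of length $2n$ with $j$ peaks are enumerated by the Narayana number $N(n,j)$, so the number of binary trees on $n$ nodes with $k$ left branches equals $N(n, n-k)$.

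To match the statement it then suffices to apply the symmetry $N(n,j) = N(n, n+1-j)$, which is immediate from the closed form $N(n,j) = \tfrac{1}{n}\binom{n}{j}\binom{n}{j-1}$: this gives $N(n, n-k) = N(n, k+1)$, as required. The agreement with the right-branch count is obtained by the involution on binary trees swapping the left and right subtree at every node, which interchanges the two statistics. The only delicate point is the careful bookkeeping of the peak correspondence under $\Phi$ — in particular the empty-subtree edge case — and once this is established the result reduces to recalling the peak enumeration of Dyck paths and the Narayana symmetry.
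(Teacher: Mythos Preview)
Your argument is correct. The bijection $\Phi$ is standard, the identification of left-childless nodes with peaks is verified exactly as you describe (the step after a node's $U$ is $D$ iff $\Phi(T_L)$ is empty), and the Narayana symmetry $N(n,j)=N(n,n+1-j)$ follows directly from the binomial formula. The mirror involution handles the right-branch statement cleanly.

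As for comparison: the paper does not actually supply a proof of this proposition. It is stated as a known fact and immediately used to conclude Theorem~\ref{thm-0010}. So you have done more than the paper does here --- you have given a self-contained bijective argument where the paper simply cites the result. Your route via Dyck-path peaks is one of the classical proofs of this enumeration, and it dovetails nicely with the paper's earlier use of Dyck paths in Section~\ref{sec:k000}; an alternative the paper might have had in mind (given its emphasis on the decreasing-tree bijection) is to count descents in $132$-avoiding permutations directly, since a node has a left child exactly when the corresponding value of $\sigma$ is preceded by a smaller value, and descents in $\SG_n(132)$ are known to be Narayana-distributed. Either way, your proof stands on its own.
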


Proposition~\ref{nara_bst} and the previous observation conclude
Theorem~\ref{thm-0010}. We know extend this previous statement from
pattern $(0,0,1,0)$ to pattern $(0,0,\ell,0)$ with $\ell$ any positive
integer.

\begin{theorem}
\label{thm-00k0}
$Q_{132}^{(0,0,\ell,0)}|_{t^nx^k}$ is equal to the number of binary
trees over $n$ nodes containing themselves $k$ left sub-trees over
at least $\ell$ nodes.

\begin{proof}
This is a straight-forward generalization. The bijection between binary
trees and $132$-avoiding permutations contains this result,
a position matching with $MMP(0,0,\ell,0)$ has a value that sees at least
$\ell$ smaller values on its left. All these smaller and left values are
inserted to the left of the concerning node in the binary decreasing
tree. This give another proof of the second part of Theorem 3 of~\cite{KRT}.
\end{proof}
\end{theorem}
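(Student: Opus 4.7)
The plan is to extend the bijection $\sigma \mapsto T(\sigma)$ between $132$-avoiding permutations and planar binary trees used in Proposition~\ref{bst_mmp} from the $\ell=1$ case to general $\ell$. The target equivalence is: $\sigma_i$ matches $MMP(0,0,\ell,0)$ in $\sigma$ if and only if the left subtree of the node labeled $\sigma_i$ in $T(\sigma)$ has at least $\ell$ nodes. Summing over $\SG_n(132)$ and forgetting labels (legitimate because, as recalled in the previous subsection, the decreasing labeling is determined uniquely by the underlying tree shape thanks to $132$-avoidance) then yields the theorem.

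The heart of the argument is the following strengthening of Proposition~\ref{bst_mmp}: for every node $v$ of $T(\sigma)$ with label $\sigma_i$, the labels appearing in the left subtree of $v$ are exactly the values $\sigma_j$ with $j < i$ and $\sigma_j < \sigma_i$. I would prove this by induction along the recursive construction of $T(\sigma)$. The base case is trivial, and the inductive step rests on the following split property, a direct consequence of $132$-avoidance: if $m = \max(\sigma)$ sits at position $p$, then every value lying to the left of position $p$ exceeds every value lying to the right of position $p$, since otherwise some triple $\sigma_j,\,m,\,\sigma_k$ with $j<p<k$ and $\sigma_j<\sigma_k$ would realize the pattern $132$.

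From this split property I then deduce, for an arbitrary node $v$, that every value smaller than $\sigma_i=\mathrm{label}(v)$ already lies inside the subtree rooted at $v$. Indeed, if some $w<\sigma_i$ were outside that subtree, the lowest common ancestor $u$ of $v$ and the node labeled $w$ would be a strict ancestor of $v$; applying the split property at $u$ shows that the labels in one child subtree of $u$ all exceed the labels in the other, forcing $v$ and $w$ to lie in the same child subtree and contradicting the choice of $u$ as LCA. Once the descendants of $v$ are identified with the values smaller than $\sigma_i$ in $\sigma$, the recursive split at $v$ partitions them into the values occurring before $\sigma_i$ (its left subtree) and the values occurring after (its right subtree), which is precisely the claim.

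The main subtlety I expect is the bookkeeping of standardization: the recursion descends into a subword whose absolute values differ from those of $\sigma$, so the predicates ``$\sigma_j<\sigma_i$'' and ``$j<i$'' must be reinterpreted in the substandardization. Since standardization preserves both the relative order of values and their relative positions, the inductive hypothesis transports without trouble. After this verification, combining the strengthened lemma with the identity $mmp(0,0,\ell,0)(\sigma) = \#\{v \in T(\sigma) : |\mathrm{left\ subtree\ of\ }v| \geq \ell\}$ finishes the proof and gives a direct generalization of the second part of Theorem~3 of~\cite{KRT}.
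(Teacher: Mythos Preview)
Your overall approach coincides with the paper's: use the decreasing-tree bijection and argue that the left subtree of the node labeled $\sigma_i$ consists exactly of the values $\sigma_j$ with $j<i$ and $\sigma_j<\sigma_i$, so that $mmp(0,0,\ell,0)(\sigma)$ counts the nodes whose left subtree has size at least $\ell$. That target lemma is correct, and the inductive scheme you announce (split at the maximum, use the $132$-avoidance split property, recurse on each side) is exactly how one proves it.

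However, the specific intermediate claim you rely on is false. You write that ``every value smaller than $\sigma_i=\mathrm{label}(v)$ already lies inside the subtree rooted at $v$''. In the paper's running example $\sigma=768945213$, take $v$ to be the node labeled $7=\sigma_1$: the subtree rooted at $v$ contains only $\{7,6\}$, while the values smaller than $7$ are $\{1,2,3,4,5,6\}$. Your LCA argument breaks precisely here: with $w=5$ the common ancestor is the root $9$, the split property says left-subtree labels $\{6,7,8\}$ exceed right-subtree labels $\{1,2,3,4,5\}$, and this is perfectly compatible with $v$ on the left and $w$ on the right, so no contradiction arises.

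The fix is to drop that intermediate claim entirely. What the recursive construction actually gives is that the subtree rooted at $v$ corresponds to a \emph{contiguous} block of positions $[a,b]$ in $\sigma$ with $\sigma_i=\max\{\sigma_a,\dots,\sigma_b\}$, and its left subtree corresponds to positions $[a,i-1]$. The induction you sketched then shows that no position $j<a$ can satisfy $\sigma_j<\sigma_i$: at the step where $v$ first lands in a right subtree, the split property forces every value to its left (outside the block) to exceed $\sigma_i$. This yields the desired identification $\{\sigma_j:a\le j<i\}=\{\sigma_j:j<i,\ \sigma_j<\sigma_i\}$ directly, without the false global statement about all smaller values.
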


For example, with $\ell=2$ and $n=4$, we have $Q_{4,
  132}^{(0,0,\ell,0)}(x) = 5 + 7x + 2x^2$. As one can check on Figure~\ref{dec_trees_4}
displaying the fourteen binary trees over $4$ nodes in which the left sub-trees
over at least $2$ nodes have been circled.

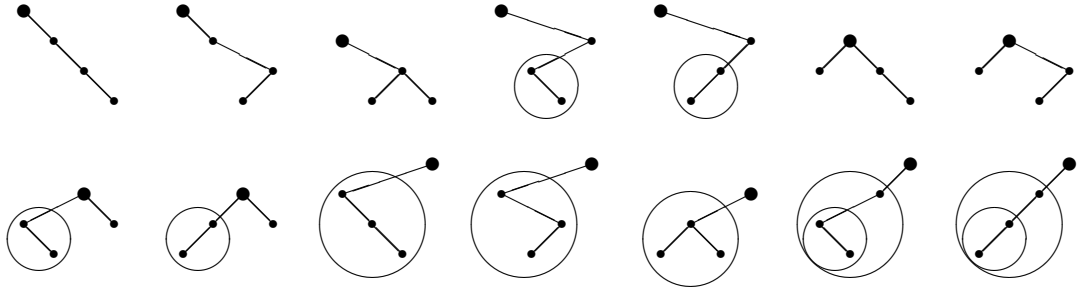
\begin{figure}
{\setlength\unitlength{4mm}

\begin{picture}(5,5)
\put(1,4){\circle*{0.3}}
\put(2,3){\circle*{0.3}}
\put(3,2){\circle*{0.3}}
\put(4,1){\circle*{0.3}}
\put(3,2){\line(1,-1){1}}
\put(2,3){\line(1,-1){1}}
\put(1,4){\line(1,-1){1}}
\put(1,4){\circle*{0.4}}
\end{picture}
\begin{picture}(5,5)
\put(1,4){\circle*{0.3}}
\put(2,3){\circle*{0.3}}
\put(3,1){\circle*{0.3}}
\put(4,2){\circle*{0.3}}
\put(4,2){\line(-1,-1){1}}
\put(2,3){\line(2,-1){2}}
\put(1,4){\line(1,-1){1}}
\put(1,4){\circle*{0.4}}
\end{picture}
\begin{picture}(5,4)
\put(1,3){\circle*{0.3}}
\put(2,1){\circle*{0.3}}
\put(3,2){\circle*{0.3}}
\put(4,1){\circle*{0.3}}
\put(3,2){\line(-1,-1){1}}
\put(3,2){\line(1,-1){1}}
\put(1,3){\line(2,-1){2}}
\put(1,3){\circle*{0.4}}
\end{picture}
\begin{picture}(5,5) %% 1 ici !!!
\put(1,4){\circle*{0.3}}
\put(2,2){\circle*{0.3}}
\put(3,1){\circle*{0.3}}
\put(2,2){\line(1,-1){1}}
\put(4,3){\circle*{0.3}}
\put(4,3){\line(-2,-1){2}}
\put(1,4){\line(3,-1){3}}
\put(1,4){\circle*{0.4}}
\put(2.5,1.5){\circle{2}}
\end{picture}
\begin{picture}(5,5) %% 1 ici !!!
\put(1,4){\circle*{0.3}}
\put(2,1){\circle*{0.3}}
\put(3,2){\circle*{0.3}}
\put(3,2){\line(-1,-1){1}}
\put(4,3){\circle*{0.3}}
\put(4,3){\line(-1,-1){1}}
\put(1,4){\line(3,-1){3}}
\put(1,4){\circle*{0.4}}
\put(2.5,1.5){\circle{2}}
\end{picture}
\begin{picture}(5,4)
\put(1,2){\circle*{0.3}}
\put(2,3){\circle*{0.3}}
\put(3,2){\circle*{0.3}}
\put(4,1){\circle*{0.3}}
\put(3,2){\line(1,-1){1}}
\put(2,3){\line(-1,-1){1}}
\put(2,3){\line(1,-1){1}}
\put(2,3){\circle*{0.4}}
\end{picture}
\begin{picture}(5,4)
\put(1,2){\circle*{0.3}}
\put(2,3){\circle*{0.3}}
\put(3,1){\circle*{0.3}}
\put(4,2){\circle*{0.3}}
\put(4,2){\line(-1,-1){1}}
\put(2,3){\line(-1,-1){1}}
\put(2,3){\line(2,-1){2}}
\put(2,3){\circle*{0.4}}
\end{picture}
\begin{picture}(5,4)  %% 1 ici !!!
\put(1,2){\circle*{0.3}}
\put(2,1){\circle*{0.3}}
\put(1,2){\line(1,-1){1}}
\put(3,3){\circle*{0.3}}
\put(4,2){\circle*{0.3}}
\put(3,3){\line(-2,-1){2}}
\put(3,3){\line(1,-1){1}}
\put(3,3){\circle*{0.4}}
\put(1.5,1.5){\circle{2}}
\end{picture}
\begin{picture}(5,4) %% 1 ici !!!
\put(1,1){\circle*{0.3}}
\put(2,2){\circle*{0.3}}
\put(2,2){\line(-1,-1){1}}
\put(3,3){\circle*{0.3}}
\put(4,2){\circle*{0.3}}
\put(3,3){\line(-1,-1){1}}
\put(3,3){\line(1,-1){1}}
\put(3,3){\circle*{0.4}}
\put(1.5,1.5){\circle{2}}
\end{picture}
\begin{picture}(5,5) %% 1 ici !!!
\put(1,3){\circle*{0.3}}
\put(2,2){\circle*{0.3}}
\put(3,1){\circle*{0.3}}
\put(2,2){\line(1,-1){1}}
\put(1,3){\line(1,-1){1}}
\put(4,4){\circle*{0.3}}
\put(4,4){\line(-3,-1){3}}
\put(4,4){\circle*{0.4}}
\put(2,2){\circle{3.5}}
\end{picture}
\begin{picture}(5,5) %% 1 ici !!!
\put(1,3){\circle*{0.3}}
\put(2,1){\circle*{0.3}}
\put(3,2){\circle*{0.3}}
\put(3,2){\line(-1,-1){1}}
\put(1,3){\line(2,-1){2}}
\put(4,4){\circle*{0.3}}
\put(4,4){\line(-3,-1){3}}
\put(4,4){\circle*{0.4}}
\put(1.75,2){\circle{3.5}}
\end{picture}
\begin{picture}(5,4) %% 1 ici !!!
\put(1,1){\circle*{0.3}}
\put(2,2){\circle*{0.3}}
\put(3,1){\circle*{0.3}}
\put(2,2){\line(-1,-1){1}}
\put(2,2){\line(1,-1){1}}
\put(4,3){\circle*{0.3}}
\put(4,3){\line(-2,-1){2}}
\put(4,3){\circle*{0.4}}
\put(2,1.5){\circle{3}}
\end{picture}
\begin{picture}(5,5) %% 2 ici !!!
\put(1,2){\circle*{0.3}}
\put(2,1){\circle*{0.3}}
\put(1,2){\line(1,-1){1}}
\put(3,3){\circle*{0.3}}
\put(3,3){\line(-2,-1){2}}
\put(4,4){\circle*{0.3}}
\put(4,4){\line(-1,-1){1}}
\put(4,4){\circle*{0.4}}
\put(2,2){\circle{3.5}}
\put(1.5,1.5){\circle{2}}
\end{picture}
\begin{picture}(5,5) %% 2 ici !!!
\put(1,1){\circle*{0.3}}
\put(2,2){\circle*{0.3}}
\put(2,2){\line(-1,-1){1}}
\put(3,3){\circle*{0.3}}
\put(3,3){\line(-1,-1){1}}
\put(4,4){\circle*{0.3}}
\put(4,4){\line(-1,-1){1}}
\put(4,4){\circle*{0.4}}
\put(2,2){\circle{3.5}}
\put(1.5,1.5){\circle{2}}
\end{picture}}
\caption{Decreasing trees of $132$-avoiding permutations of length $4$.}~\label{dec_trees_4}
\end{figure}

\section*{acknowledgements}
\label{sec:ack}

The author would like to thanks Jean-Christophe Novelli for useful
discussions and comments. This research was driven by computer
exploration using the open-source mathematical software
\texttt{Sage}~\cite{Sage} and its algebraic combinatorics features
developed by the \texttt{Sage-Combinat}
community~\cite{Sage-Combinat}.

%\bibliographystyle{abbrvnat}
% use the following instead if you encounter problems 
\bibliographystyle{alpha}
\bibliography{main}
\label{sec:biblio}

\end{document}